\documentclass[11pt,twoside]{article}
\usepackage[utf8]{inputenc}
\usepackage[english]{babel}
\usepackage{fancyhdr}

\usepackage{amsmath,amssymb,amsfonts,amsthm}
\usepackage{mathtools}
\usepackage{algorithm}
\usepackage{algorithmicx}
\usepackage{algpseudocode}
\usepackage{multirow}
\usepackage{makecell}
\usepackage{bigstrut}
\usepackage{url}
\usepackage{xcolor}
\usepackage{hyperref}
\usepackage{caption}
\usepackage{subfig}
\usepackage{rotating}
\usepackage[title]{appendix}
\usepackage[square,numbers]{natbib}

\newtheorem{theorem}{Theorem}
\newtheorem{proposition}[theorem]{Proposition}
\newtheorem{lemma}[theorem]{Lemma}
\newtheorem{remark}{Remark}
\newtheorem{assumption}{Assumption}

\DeclareMathOperator*{\argmax}{Argmax}
\DeclarePairedDelimiter{\norm}{\lVert}{\rVert}
\DeclarePairedDelimiter{\abs}{\lvert}{\rvert}
\DeclarePairedDelimiter{\bigO}{\mathcal O(}{)}

\def\R{\ensuremath{\mathbb R}}
\def\Rn{\ensuremath{\mathbb R^n}}
\def\I{\ensuremath{\mathcal I}}
\def\J{\ensuremath{\mathcal J}}
\def\RI{\ensuremath{\R^{\abs{\I}}}}
\def\RIk{\ensuremath{\R^{\abs{\I_k}}}}
\def\IBCN{IBCN}
\def\BCDFIRST{BCD1}
\def\BCDSECOND{BCD2}
\def\alphahat{\ensuremath{\min\biggl\{\dfrac{\beta}{\norm{\nabla^2_{\I_k} f(x_k)}}, \sqrt{\dfrac{3\beta}{\sigma_k \norm{\nabla_{\I_k} f(x_k)}}}\biggr\}}}
\def\S{\ensuremath{\mathcal S}}
\def\sigmamin{\ensuremath{\sigma^{\text{min}}}}
\def\sigmamax{\ensuremath{\sigma^{\text{max}}}}
\def\fmin{\ensuremath{f^{\text{min}}}}
\def\Hmax{\ensuremath{B}}
\def\Lmin{\ensuremath{L^{\text{min}}}}

\def\Kepsblock{\ensuremath{K_{\epsilon}^{\text b}}}
\def\Keps{\ensuremath{K_{\epsilon}}}
\def\LS{\ensuremath{\mathcal L^0}}

\textheight 601.40024pt 
\textwidth 440pt 
\voffset = -30pt 
\setlength\oddsidemargin{\dimexpr(\paperwidth-\textwidth)/2 - 1in\relax}
\setlength\evensidemargin{\oddsidemargin}

\pagestyle{fancy}
\fancyhead{}

\fancyhead[LE]{\small Block cubic Newton with greedy selection}

\fancyhead[RO]{\small Andrea Cristofari}

\hypersetup{
	colorlinks=false,
	urlcolor=black,
	pdfborder={0 0 0}
}

\newcommand{\email}[1]{E-mail: \href{mailto:#1}{\texttt{#1}}}

\captionsetup[table]{skip=10pt} 

\bibliographystyle{abbrvnat}
\setlength{\bibsep}{0pt} 

\begin{document}
\thispagestyle{plain}

\setcounter{page}{1}

{\centering
	{\LARGE \bfseries Block cubic Newton with greedy selection}

	\bigskip\bigskip
	Andrea Cristofari$^*$
	\bigskip
	
}

\begin{center}
	\small{\noindent$^*$Department of Civil Engineering and Computer Science Engineering \\
		University of Rome ``Tor Vergata'' \\
		Via del Politecnico, 1, 00133 Rome, Italy \\
		\email{andrea.cristofari@uniroma2.it} \\
	}
\end{center}

\bigskip\par\bigskip\par
\noindent \textbf{Abstract.}
A second-order block coordinate descent method is proposed for the unconstrained minimization of an objective function with a Lipschitz continuous Hessian.
At each iteration, a block of variables is selected by means of a greedy (Gauss-Southwell) rule which considers the amount of first-order stationarity violation,
then an approximate minimizer of a cubic model is computed for the block update.
In the proposed scheme, blocks are not required to have a predetermined structure and their size may change during the iterations.
For non-convex objective functions, global convergence to stationary points is proved and a worst-case iteration complexity analysis is provided.
In particular, given a tolerance $\epsilon$,
we show that at most $\bigO{\epsilon^{-3/2}}$ iterations are needed to drive the stationarity violation with respect to a selected block of variables below $\epsilon$,
while at most $\bigO{\epsilon^{-2}}$ iterations are needed to drive the stationarity violation with respect to all variables below $\epsilon$.
Numerical results are finally given, comparing the proposed approach with other second-order methods and block selection rules.

\bigskip\par
\noindent \textbf{Keywords.} Block coordinate descent. Cubic Newton methods. Second-order methods. Worst-case iteration complexity.


\section{Introduction}\label{sec:intro}
Many challenging problems require the minimization of an objective function with several variables.
In this respect, block coordinate descent methods often represent an advantageous approach, especially when the objective function has a nice structure,
since these methods update a block of variables at each iteration and may have a low per-iteration cost.
In the literature, block coordinate descent methods have been extensively analyzed in several forms,
employing different rules to choose and update the blocks.

In particular, we can recognize three main block selection rules~\cite{nutini:2022,wright:2015}:
besides \textit{random} rules which choose blocks randomly, we have \textit{cyclic} (Gauss-Seidel) rules, which choose blocks by requiring
that each variable is selected at least once within any window of a pre-specified number of iterations, and \textit{greedy} (Gauss-Southwell) rules, which select the best block in terms of first-order stationarity violation at each iteration.
It is well known that greedy rules, compared to cyclic and random rules,
on the one hand require an additional effort to compute the whole gradient of the objective function,
but on the other hand tend to make a major progress per iteration~\cite{nutini:2022}.
Generally speaking, which block selection rule is the best in practice strongly depends on the features of the problems.
For first-order methods, theoretical and numerical comparisons of the different approaches can be found in, e.g., \cite{dhillon:2011,nutini:2015,venturini:2023},
showing that a greedy rule can be more efficient for some classes of problems with a certain structure.
In more detail, as discussed in~\cite{nutini:2015} for first-order coordinate descent methods, cyclic and random rules work well when the cost of performing $n$ coordinate updates is similar to the cost of performing one full gradient iteration, while a greedy rule can be efficiently used in other cases, e.g., when the objective function involves the multiplication of a sparse matrix by a vector.

Most block coordinate descent methods use first-order information and gained great popularity as they guarantee high efficiency in several applications.
However, when the objective function is twice continuously differentiable, second-order information can be conveniently used as well,
in order to speed up the convergence of the algorithm and overcome some drawbacks connected with first-order methods,
such as the performance deterioration in ill-conditioned or highly non-separable problems~\cite{fountoulakis:2018}.
Of course, second-order information should be used judiciously in a block coordinate descent scheme, so as not to increase the per-iteration cost excessively.
A possibility is extending, to a block coordinate descent setting, ideas from \textit{cubic Newton methods}~\cite{cartis:2011a,cartis:2011b,dussault:2018,dussault:2023,grapiglia:2017,griewank:1981,nesterov:2006},
where, at each iteration, the next point is obtained by minimizing a cubic model, that is,
a second-order model with cubic regularization (higher order models may also be considered when the objective function is several times continuously differentiable~\cite{birgin:2017,cartis:2019}).

In recent years, block coordinate descent versions of cubic Newton methods
were proposed in the literature using different block selection rules.
In particular, cyclic-type block selection was considered in~\cite{amaral:2022} for high order models which include cubic models as a special case,
whereas random block selection was analyzed in~\cite{doikov:2018,hanzely:2020} and~\cite{zhao:2024} for convex and non-convex objective functions, respectively.

To the best of the author's knowledge, greedy rules have not been investigated yet for
block coordinate descent versions of cubic Newton methods. The present paper aims to fill the gap by giving a detailed analysis of this scheme from both a theoretical and a numerical perspective.
Let us also highlight that, when using blocks made of $q$ variables, the number of second-order partial derivatives is of the order of $q^2$, so the cost of computing the full gradient of the objective function may not be dominant.
Hence, in a second-order setting, the computational disadvantage of a greedy rule over random and cyclic rules may reduce in some cases. However, as highlighted above, this is strongly related to the structure of the problem.

\subsection{Main contributions}
For the proposed block cubic Newton method with greedy selection, we provide the following worst-case iteration complexity bounds
to minimize a non-convex objective function with Lipschitz continuous Hessian:
\begin{itemize}
\item at most $\bigO{\epsilon^{-3/2}}$ iterations are needed to drive the stationarity violation with respect to \textit{a selected block of variables} below $\epsilon$,
\item at most $\bigO{\epsilon^{-2}}$ iterations are needed to drive the stationarity violation with respect to \textit{all variables} below $\epsilon$.
\end{itemize}

Our results are appealing if compared to those given in~\cite{amaral:2022} for cyclic-type block selection when using cubic models.
Specifically, the former complexity bound of $\bigO{\epsilon^{-3/2}}$ over a selected block of variables was obtained in~\cite{amaral:2022} as well,
but note that the latter complexity bound of $\bigO{\epsilon^{-2}}$ over all variables improves upon the one given in~\cite{amaral:2022}, which is $\bigO{\epsilon^{-3}}$.

Note also that first-order block coordinate descent methods need at most $\bigO{\epsilon^{-2}}$ iterations to reduce the stationarity violation over all variables below $\epsilon$ using either cyclic~\cite{beck:2013} or greedy~\cite{nutini:2015} block selection,
since they both guarantee an objective decrease proportional to the squared norm of the gradient of the objective function~\cite{beck:2013,nutini:2015}.

These results for cyclic and greedy selection in first- and second-order settings are summarized in Table~\ref{tab:literature}.

\begin{table}
\centering
\caption{Worst-case iteration complexity of cyclic and greedy selection when using gradient-type and cubic Newton updates on non-convex objective functions.}
{\begin{tabular}{c c c c}
\hline
Block update & Block selection & Worst-case iteration complexity & Reference\bigstrut[t]\bigstrut[b] \\
\hline
\multirow{2}*{\makecell[c]{gradient \\ (1st order)}} & cyclic & $\bigO{\epsilon^{-2}}$ & \cite{beck:2013} \bigstrut[t]\bigstrut[b] \\ \cline{2-4}
& greedy & $\bigO{\epsilon^{-2}}$ & \cite{nutini:2015} \bigstrut[t]\bigstrut[b] \\
\hline
\multirow{4}*{\makecell[c]{cubic Newton \\ (2nd order)}} & \multirow{2}*{cyclic} & $\bigO{\epsilon^{-3/2}}$ for selected block & \cite{amaral:2022} \bigstrut[t] \\
& & $\bigO{\epsilon^{-3}}$ for all variables & \cite{amaral:2022} \bigstrut[b] \\ \cline{2-4}
& \multirow{2}*{greedy} & $\bigO{\epsilon^{-3/2}}$ for selected block & this paper \bigstrut[t] \\
& & $\bigO{\epsilon^{-2}}$ for all variables & this paper \bigstrut[b] \\
\hline
\end{tabular}}
\label{tab:literature}
\end{table}

Hence, when using cubic Newton methods in a block coordinate descent setting, the literature indicates that the worst-case iteration complexity of cyclic selection to reduce the stationarity violation over all variables below $\epsilon$ is worse than that of first-order methods, namely $\bigO{\epsilon^{-3}}$ versus $\bigO{\epsilon^{-2}}$. 
In this fashion, the proposed greedy selection recovers the $\bigO{\epsilon^{-2}}$ complexity of first-order methods, while preserving the same efficiency of the cyclic selection in reducing the stationarity violation over a chosen block below $\epsilon$ within $\bigO{\epsilon^{-3/2}}$ iterations.

Let us remark that, for the proposed method,
we do not need to know the Lipschitz constant of the Hessian of the objective function.
Moreover, we use inexact minimizers of the cubic model whose computation does not require additional evaluations of the objective function or its derivatives.
Due to the block structure and the use of inexact information,
we name our algorithm \textit{Inexact Block Cubic Newton} (\IBCN) method.

We also assess the practical performance of the proposed \IBCN\ method on non-convex and convex problems.
In particular, we first demonstrate its efficiency compared with other block updates,
and then show that the proposed scheme can outperform cyclic and random variants when their per-iteration cost is similar to that of the greedy selection.

Furthermore, the code is freely available at \url{https://github.com/acristofari/ibcn}.

The rest of the paper is organized as follows.
In Section~\ref{sec:preliminaries}, we introduce the problem and give preliminary results.
In Section~\ref{sec:alg}, we describe the proposed method.
In Section~\ref{sec:conv}, we carry out the convergence analysis and give worst-case iteration complexity bounds.
In Section~\ref{sec:num}, we show some numerical results.
Finally, we draw some conclusions in Section~\ref{sec:concl}.

\section{Preliminaries and notations}\label{sec:preliminaries}
We consider the following unconstrained optimization problem:
\begin{equation}\label{prob}
\min_{x \in \Rn} f(x),
\end{equation}
where $f \colon \Rn \to \R$ is a (possibly non-convex) objective function.
We assume that the Hessian matrix $\nabla^2 f(x)$ is Lipschitz continuous over $\Rn$ with constant $L>0$, that is,
\[
\norm{\nabla^2 f(x) - \nabla^2 f(y)} \le L\norm{x-y} \quad \forall x,y \in \Rn,
\]
where, here and in the rest of the paper, $\norm{v}$ is the Euclidean norm for any vector $v$,
whereas $\norm{A}$ is the norm induced by the vector Euclidean norm for any matrix $A$.
The sup-norm of a vector $v$ is indicated by $\norm v_{\infty}$.

Given $\I \subseteq \{1,\ldots,n\}$, we denote by $U_{\I} \in \R^{n \times \abs{\I}}$ the submatrix of the $n$-dimensional identity matrix
obtained by removing all columns with indices not belonging to $\I$.
Then, given $x \in \Rn$ and $\I \subseteq \{1,\ldots,n\}$, we use the following notation:
\begin{itemize}
\item $x_{\I} \in \RI$ is the subvector of $x$ with elements in $\I$, that is,
    \[
    x_{\I} = U_{\I}^T x;
    \]
\item $\nabla_{\I} f(x) \in \RI$ is the vector of first-order partial derivatives of $f$ with respect to $x_i$, $i \in \I$, that is,
    \begin{equation}\label{subvec_g}
    \nabla_{\I} f(x) = U_{\I}^T \nabla f(x);
    \end{equation}
\item $\nabla^2_{\I} f(x) \in \R^{\abs{\I} \times \abs{\I}}$ is the matrix of second-order partial derivatives of $f$ with respect to $x_i$, $i \in \I$, that is,
    \begin{equation}\label{submat_h}
    \nabla^2_{\I} f(x) = U_{\I}^T \nabla^2 f(x) U_{\I}.
    \end{equation}
\end{itemize}
For example, if $n=5$ and
\[
x = \begin{bmatrix} 3 \\ 1 \\ 4 \\ -2 \\ 0 \end{bmatrix}, \quad \nabla f(x) = \begin{bmatrix} 2 \\ -1 \\ 0 \\ -3 \\ 4\end{bmatrix}, \quad
\nabla^2 f(x) = \begin{bmatrix} -2 & &  3 & & -6 & &  0 & & -7 \\
                                 3 & &  1 & & -5 & &  4 & &  2 \\
                                -6 & & -5 & &  7 & & -3 & & -1 \\
                                 0 & &  4 & & -3 & &  5 & & -4 \\
                                -7 & &  2 & & -1 & & -4 & &  6 \end{bmatrix},
\]
using $\I = \{1,3,4\}$ we get
\[
U_{\I} = \begin{bmatrix} 1 & 0 & 0 \\ 0 & 0 & 0 \\ 0 & 1 & 0 \\ 0 & 0 & 1 \\ 0 & 0 & 0 \end{bmatrix}, \quad
x_{\I} = \begin{bmatrix} 3 \\ 4 \\ -2 \end{bmatrix}, \quad \nabla_{\I} f(x) = \begin{bmatrix} 2 \\ 0 \\ -3 \end{bmatrix}, \quad
\nabla^2_{\I} f(x) = \begin{bmatrix} -2 & & -6 & &  0 \\
                                -6 & &  7 & & -3 \\
                                 0 & & -3 & &  5 \end{bmatrix}.
\]

Note that, for any choice of $\I \subseteq \{1,\ldots,n\}$, we have
\begin{align}
\norm{U_{\I}} & = 1 \label{norm_U}, \\
\norm{U_{\I} v} & = \norm{v} \quad \forall v \in \RI \label{norm_v}.
\end{align}

Moreover, for any choice of $\I \subseteq \{1,\ldots,n\}$, we define the block Lipschitz constant $L_{\I}$ such that
\begin{equation}\label{hess_lips}
\norm{\nabla^2_{\I} f(x+U_{\I}s) - \nabla^2_{\I} f(x)} \le L_{\I} \norm s \quad \forall x \in \Rn, \, \forall s \in \RI.
\end{equation}
Note that
\begin{equation}\label{L_I}
L_{\I} \in (0,L]
\end{equation}
since, recalling~\eqref{submat_h}, we have
\[
\begin{split}
\norm{\nabla^2_{\I} f(x+U_{\I}s) - \nabla^2_{\I} f(x)} & = \norm{U_{\I}^T (\nabla^2 f(x+U_{\I}s) - \nabla^2 f(x))U_{\I}} \\
                                              & \le \norm{\nabla^2 f(x+U_{\I}s) - \nabla^2 f(x)} \norm{U_{\I}}^2 \\
                                              & = \norm{\nabla^2 f(x+U_{\I}s) - \nabla^2 f(x)} \le L \norm{U_{\I}s} = L \norm s,
\end{split}
\]
where~\eqref{norm_U} has been used in the second equality and~\eqref{norm_v} has been used in the last equality.

Let us also define
\[
\Lmin = \min_{\I \subseteq \{1,\ldots,n\}} L_{\I}.
\]
From~\eqref{L_I}, it follows that
\begin{equation}\label{L_bounds}
0 < \Lmin \le L_{\I} \le L \quad \forall \I \subseteq \{1,\ldots,n\}.
\end{equation}

Extending known results on functions with Lipschitz continuous Hessian~\cite{dennis:1996,nesterov:2006},
we can give the following proposition whose proof is reported in Appendix~\ref{append:cubic}.

\begin{proposition}\label{prop:lips_block}
Given a point $x \in \Rn$ and a block of variable indices $\I \subseteq \{1,\ldots,n\}$, for all $s \in \RI$ we have that
\begin{gather}
\norm{\nabla_{\I} f(x+U_{\I}s) - \nabla_{\I} f(x) - \nabla^2_{\I} f(x) s} \le \frac{L_{\I}}2 \norm s^2, \label{ineq_lips_block} \\
\abs[Big]{f(x+U_{\I}s) - f(x) - \nabla_{\I} f(x)^T s - \frac12 s^T \nabla^2_{\I} f(x) s} \le \frac{L_{\I}}6 \norm s^3. \label{ub_lips_block}
\end{gather}
\end{proposition}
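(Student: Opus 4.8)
The plan is to reduce both multivariate estimates to one-dimensional integrals along the segment $t \mapsto x + t U_{\I} s$, $t \in [0,1]$, and then invoke the block Lipschitz condition~\eqref{hess_lips}. First I would introduce the vector-valued map $g(t) = \nabla_{\I} f(x + t U_{\I} s)$. Using~\eqref{subvec_g} we have $g(t) = U_{\I}^T \nabla f(x + t U_{\I} s)$, so the chain rule together with~\eqref{submat_h} gives $g'(t) = U_{\I}^T \nabla^2 f(x + t U_{\I} s) U_{\I} s = \nabla^2_{\I} f(x + t U_{\I} s)\, s$. By the fundamental theorem of calculus, $\nabla_{\I} f(x + U_{\I} s) - \nabla_{\I} f(x) = \int_0^1 \nabla^2_{\I} f(x + t U_{\I} s)\, s \, dt$, and subtracting the constant term $\nabla^2_{\I} f(x)\, s = \int_0^1 \nabla^2_{\I} f(x)\, s\, dt$ turns the left-hand side of~\eqref{ineq_lips_block} into $\int_0^1 \bigl[\nabla^2_{\I} f(x + t U_{\I} s) - \nabla^2_{\I} f(x)\bigr]\, s \, dt$.

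Next I would bound this in norm, passing the norm inside the integral and using submultiplicativity to get $\int_0^1 \norm{\nabla^2_{\I} f(x + U_{\I}(ts)) - \nabla^2_{\I} f(x)}\, \norm{s} \, dt$. Here the key step is to apply~\eqref{hess_lips} with the \emph{scaled} increment $ts \in \RI$, which yields $\norm{\nabla^2_{\I} f(x + U_{\I}(ts)) - \nabla^2_{\I} f(x)} \le L_{\I}\, t\, \norm{s}$; integrating $\int_0^1 t \, dt = 1/2$ then delivers~\eqref{ineq_lips_block}.

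For~\eqref{ub_lips_block} I would repeat the scheme one order higher, now with the scalar map $\phi(t) = f(x + t U_{\I} s)$, whose derivative is $\phi'(t) = \nabla_{\I} f(x + t U_{\I} s)^T s$ by the same chain-rule computation. Writing $\nabla_{\I} f(x)^T s = \int_0^1 \nabla_{\I} f(x)^T s\, dt$ and $\tfrac12 s^T \nabla^2_{\I} f(x) s = \int_0^1 t\, s^T \nabla^2_{\I} f(x) s \, dt$ (using $\int_0^1 t\,dt = 1/2$ and symmetry of the block Hessian), the quantity inside the absolute value in~\eqref{ub_lips_block} equals $\int_0^1 \bigl[\nabla_{\I} f(x + t U_{\I} s) - \nabla_{\I} f(x) - t\, \nabla^2_{\I} f(x) s\bigr]^T s \, dt$. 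I would bound the integrand by Cauchy--Schwarz and then reuse the already established inequality~\eqref{ineq_lips_block}, applied again to the increment $ts$, to get $\norm{\nabla_{\I} f(x + U_{\I}(ts)) - \nabla_{\I} f(x) - \nabla^2_{\I} f(x)(ts)} \le \tfrac{L_{\I}}2 t^2 \norm{s}^2$; since $\nabla^2_{\I} f(x)(ts) = t\, \nabla^2_{\I} f(x) s$, the integrand is at most $\tfrac{L_{\I}}2 t^2 \norm{s}^3$, and $\int_0^1 t^2 \, dt = 1/3$ produces the constant $L_{\I}/6$.

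The argument is entirely routine; the only point requiring care is the bookkeeping in the one-dimensional reduction, namely verifying via~\eqref{subvec_g} and~\eqref{submat_h} that differentiating along $U_{\I} s$ produces exactly the block gradient and block Hessian rather than their full-space counterparts, and correctly identifying the increment fed to~\eqref{hess_lips} as $ts$, so that the Lipschitz bound carries the factor $t$ (respectively $t^2$) that yields the $1/2$ and $1/6$ constants after integration.
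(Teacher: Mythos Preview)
Your proposal is correct and follows essentially the same route as the paper's proof: both arguments parametrize along $t \mapsto x + t U_{\I} s$, use the fundamental theorem of calculus (what the paper calls the mean value theorem in integral form) together with~\eqref{subvec_g}--\eqref{submat_h} to obtain integral representations of the remainders, and then bound via~\eqref{hess_lips} and the already-proved~\eqref{ineq_lips_block}, picking up the factors $1/2$ and $1/6$ from $\int_0^1 t\,dt$ and $\int_0^1 t^2\,dt$ respectively. The only cosmetic difference is that you work with the one-variable maps $g(t)$ and $\phi(t)$ while the paper differentiates the block-gradient map $\psi(x) = U_{\I}^T \nabla f(x)$ directly.
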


\section{The Inexact Block Cubic Newton (\IBCN) method}\label{sec:alg}
In this section, we describe the proposed algorithm, named \textit{Inexact Block Cubic Newton} (\IBCN) method,
which is reported in Algorithm~\ref{alg:ibcn}.
As will be shown, it is a natural extension of classical cubic Newton methods to a block coordinate descent scheme.

\subsection{Block selection}\label{sec:ws}
At the beginning of iteration $k$, we choose a block of variable indices $\I_k \subseteq \{1,\ldots,n\}$ by means of a classical greedy (or Gauss-Southwell) strategy~\cite{nutini:2022}, that is, $\I_k$ must include variables providing a sufficiently large amount of first-order stationarity violation. The rule can be stated as follows.

\begin{itemize}
\item[]\textbf{Greedy selection rule}:
    There exists a real number $\theta \in (0,1]$ such that
    \begin{equation}\label{ggr}
    \norm{\nabla_{\I_k} f(x_k)} \ge \theta \norm{\nabla f(x_k)} \quad \forall k \ge 0.
    \end{equation}
\end{itemize}

Note that the above greedy selection rule does not require the variables to be a priori partitioned into a fixed number of blocks,
so that even the size of blocks may change during the iterations.

In the following two propositions, we describe two simple procedures to satisfy~\eqref{ggr}.
For any iteration $k$, the first one requires $\I_k$ to include
the variable corresponding to the largest component in absolute value of $\nabla f(x_k)$,
while the second one, given an arbitrary number of (possibly overlapping) blocks of variables
covering $\{1,\ldots,n\}$, requires to compute the norm of the subvectors of $\nabla f(x_k)$ with respect to each block
in order to choose $\I_k$ as the one yielding the largest norm.

\begin{proposition}\label{prop:ws1}
For every iteration $k$, let $\hat \imath_k \in \argmax_{i=1,\ldots,n} \abs{\nabla_i f(x_k)}$ and assume that $\hat \imath_k \in \I_k$.
Then,
\[
\norm{\nabla_{\I_k} f(x_k)} \ge (n+1-\abs{\I_k})^{-1/2} \norm{\nabla f(x_k)} \quad \forall k \ge 0.
\]
It follows that~\eqref{ggr} is satisfied with
\[
\theta = \Bigl(n+1-\min_{k \ge 0}{\abs{\I_k}}\Bigr)^{-1/2}.
\]
\end{proposition}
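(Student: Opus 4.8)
The plan is to work directly from the definition of the Euclidean norm, exploiting the fact that the component of $\nabla f(x_k)$ of largest magnitude is captured inside the selected block $\I_k$. Writing $g = \nabla f(x_k)$ for brevity, the hypothesis $\hat\imath_k \in \argmax_{i=1,\ldots,n} \abs{\nabla_i f(x_k)}$ means that $\abs{\nabla_{\hat\imath_k} f(x_k)} = \norm{g}_{\infty}$, so that $(\nabla_i f(x_k))^2 \le (\nabla_{\hat\imath_k} f(x_k))^2$ for every index $i \in \{1,\ldots,n\}$.

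First I would split the square of the full gradient norm according to whether an index belongs to $\I_k$ or not:
\[
\norm{g}^2 = \sum_{i \in \I_k} (\nabla_i f(x_k))^2 + \sum_{i \notin \I_k} (\nabla_i f(x_k))^2 = \norm{\nabla_{\I_k} f(x_k)}^2 + \sum_{i \notin \I_k} (\nabla_i f(x_k))^2,
\]
where the first sum equals $\norm{\nabla_{\I_k} f(x_k)}^2$ by the definition~\eqref{subvec_g}. The key step is then to bound the complementary sum. Since $\hat\imath_k \in \I_k$, the block norm already contains the largest component, hence $\norm{\nabla_{\I_k} f(x_k)}^2 \ge (\nabla_{\hat\imath_k} f(x_k))^2 = \norm{g}_{\infty}^2$. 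Each of the $n - \abs{\I_k}$ terms in the complementary sum is at most $\norm{g}_{\infty}^2 \le \norm{\nabla_{\I_k} f(x_k)}^2$, so
\[
\sum_{i \notin \I_k} (\nabla_i f(x_k))^2 \le (n - \abs{\I_k}) \norm{\nabla_{\I_k} f(x_k)}^2.
\]
Substituting this into the previous display yields $\norm{g}^2 \le (n + 1 - \abs{\I_k}) \norm{\nabla_{\I_k} f(x_k)}^2$, and taking square roots gives the claimed bound $\norm{\nabla_{\I_k} f(x_k)} \ge (n+1-\abs{\I_k})^{-1/2}\norm{\nabla f(x_k)}$.

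For the final assertion about~\eqref{ggr}, I would observe that the coefficient $(n+1-\abs{\I_k})^{-1/2}$ is a decreasing function of $n + 1 - \abs{\I_k}$, hence it is smallest over all iterations when $\abs{\I_k}$ is smallest; replacing $\abs{\I_k}$ by $\min_{k \ge 0}\abs{\I_k}$ therefore produces a single constant $\theta = (n+1-\min_{k \ge 0}\abs{\I_k})^{-1/2}$ that lower-bounds every per-iteration coefficient, and since the block is nonempty ($\hat\imath_k \in \I_k$) and contained in $\{1,\ldots,n\}$ one checks that $\theta \in (0,1]$.

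The argument is elementary, so there is no genuine obstacle; the only point that requires care is the observation in the key step that the complementary components must be compared against the \emph{block} norm $\norm{\nabla_{\I_k} f(x_k)}$ rather than against the full gradient norm — which is legitimate precisely because the maximal component $\hat\imath_k$ lies inside $\I_k$, and it is this placement that turns the crude per-coordinate bound into the factor $n+1-\abs{\I_k}$.
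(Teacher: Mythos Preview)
Your proof is correct and follows essentially the same approach as the paper: both exploit that the maximal-magnitude component $\hat\imath_k$ lies in $\I_k$ to bound the contribution of the $n-\abs{\I_k}$ outside components by $\norm{\nabla_{\I_k} f(x_k)}^2$. The paper packages the argument through an auxiliary set $\tilde\I_k = (\{1,\ldots,n\}\setminus\I_k)\cup\{\hat\imath_k\}$ and the inequality $\norm{v}_\infty^2 \ge \abs{\tilde\I_k}^{-1}\norm{v}^2$, whereas your direct split of $\norm{\nabla f(x_k)}^2$ into block and complement is slightly more streamlined, but the substance is identical.
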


\begin{proof}
Fix any iteration $k$ and let
\[
\tilde \I_k = (\{1,\ldots,n\} \setminus \I_k) \cup \{\hat \imath_k\}.
\]
Recalling the definition of $\hat \imath_k$, we have that
\[
\norm{\nabla_{\I_k} f(x_k)}_{\infty} = \norm{\nabla_{\tilde \I_k} f(x_k)}_{\infty} = \norm{\nabla f(x_k)}_{\infty} = \abs{\nabla_{\hat \imath_k} f(x_k)}.
\]
Then, we can write
\[
\begin{split}
\norm{\nabla_{\I_k} f(x_k)}^2 & = (\nabla_{\hat \imath_k} f(x_k))^2 + \sum_{i \in \I_k \setminus \{\hat \imath_k\}}\nabla_i f(x_k)^2 \\
& = \norm{\nabla_{\tilde \I_k} f(x_k)}_{\infty}^2 + \sum_{i \in \{1,\ldots,n\} \setminus \tilde \I_k}\nabla_i f(x_k)^2 \\
& \ge \abs{\tilde \I_k}^{-1} \left(\norm{\nabla_{\tilde \I_k} f(x_k)}^2 + \sum_{i \in \{1,\ldots,n\} \setminus \tilde \I_k}\nabla_i f(x_k)^2\right) \\
& = \abs{\tilde \I_k}^{-1} \norm{\nabla f(x_k)}^2.
\end{split}
\]
Since $\abs{\tilde \I_k} = n + 1 - \abs{\I_k}$, then the desired result follows.
\end{proof}

\begin{proposition}\label{prop:ws2}
For every iteration $k$, let $\J_k^1,\ldots,\J_k^{N_k}$ be subsets of $\{1,\ldots,n\}$ such that $\bigcup\limits_{j=1}^{N_k} \J_k^j = \{1,\ldots,n\}$
and assume that $\I_k \in \argmax_{\I = \J_k^1,\ldots,\J_k^{N_k}} \norm{\nabla_{\I} f(x_k)}$.
Then,
\[
\norm{\nabla_{\I_k} f(x_k)} \ge N_k^{-1/2} \norm{\nabla f(x_k)} \quad \forall k \ge 0.
\]
It follows that~\eqref{ggr} is satisfied with
\[
\theta = \min_{k \ge 0}{N_k^{-1/2}}.
\]
\end{proposition}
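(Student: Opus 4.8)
The plan is to exploit the covering property $\bigcup_{j=1}^{N_k} \J_k^j = \{1,\ldots,n\}$ together with the fact that $\I_k$ is chosen to maximize the block gradient norm. The key observation is that, because the blocks cover the whole index set, summing the squared block-gradient norms over all $N_k$ blocks accounts for every component of $\nabla f(x_k)$ at least once (possibly several times, when blocks overlap). The maximality of $\I_k$ then lets us bound this sum from above by $N_k$ times the largest term, which is $\norm{\nabla_{\I_k} f(x_k)}^2$.

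Concretely, I would first write, for each $j$,
\[
\norm{\nabla_{\J_k^j} f(x_k)}^2 = \sum_{i \in \J_k^j} \nabla_i f(x_k)^2,
\]
and sum over $j = 1,\ldots,N_k$. Since every $i \in \{1,\ldots,n\}$ belongs to at least one $\J_k^j$, each squared partial derivative $\nabla_i f(x_k)^2$ is counted at least once on the right-hand side, so that
\[
\sum_{j=1}^{N_k} \norm{\nabla_{\J_k^j} f(x_k)}^2 \;\ge\; \sum_{i=1}^{n} \nabla_i f(x_k)^2 \;=\; \norm{\nabla f(x_k)}^2.
\]
Next I would invoke the assumption $\I_k \in \argmax_{\I = \J_k^1,\ldots,\J_k^{N_k}} \norm{\nabla_{\I} f(x_k)}$, which gives $\norm{\nabla_{\J_k^j} f(x_k)}^2 \le \norm{\nabla_{\I_k} f(x_k)}^2$ for every $j$. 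Combining the two displays yields
\[
N_k \, \norm{\nabla_{\I_k} f(x_k)}^2 \;\ge\; \sum_{j=1}^{N_k} \norm{\nabla_{\J_k^j} f(x_k)}^2 \;\ge\; \norm{\nabla f(x_k)}^2,
\]
and taking square roots produces the stated bound $\norm{\nabla_{\I_k} f(x_k)} \ge N_k^{-1/2} \norm{\nabla f(x_k)}$. Since this holds for every $k \ge 0$, the final claim that \eqref{ggr} is satisfied with $\theta = \max_{k \ge 0} N_k^{-1/2}$ follows immediately, noting that $\theta \in (0,1]$ because $N_k \ge 1$.

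There is no real obstacle here; the argument is essentially a counting estimate. The only point that deserves a moment's care is the covering inequality: the overlap between blocks can only increase the left-hand sum, so dropping repeated contributions is safe and the inequality goes in the direction we need. I would also remark, for completeness, that the feasibility of the selection (the existence of an $\I_k$ attaining the maximum) is automatic since the family $\{\J_k^1,\ldots,\J_k^{N_k}\}$ is finite.
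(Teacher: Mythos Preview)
Your proof is correct and follows essentially the same argument as the paper: bound $\norm{\nabla f(x_k)}^2$ from above by $\sum_{j=1}^{N_k}\norm{\nabla_{\J_k^j} f(x_k)}^2$ via the covering property, then bound each summand by $\norm{\nabla_{\I_k} f(x_k)}^2$ using the maximality of $\I_k$. The only difference is that you spell out the counting step explicitly, whereas the paper writes the two inequalities in a single line.
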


\begin{proof}
Fix any iteration $k$.
Since $\bigcup\limits_{j=1}^{N_k} \J_k^j = \{1,\ldots,n\}$, we can write
\[
\norm{\nabla f(x_k)}^2 \le \sum_{j=1}^{N_k} \norm{\nabla_{\J_k^j} f(x_k)}^2 \le N_k \norm{\nabla_{\I_k} f(x_k)}^2,
\]
where the last inequality follows from how $\I_k$ is selected, thus leading to the desired result.
\end{proof}

\subsection{Cubic model}
At iteration $k$, in order to update the variables in $\I_k$, we search for a suitable $s_k \in \RIk$ to move from $x_k$ along $U_{\I_k} s_k$.
To this aim, we define the cubic model $m_k(s)$ as follows:
\begin{equation}\label{mk_block}
m_k(s) = q_k(s) + \frac{\sigma_k}6 \norm s^3,
\end{equation}
where $\sigma_k$ is a positive scalar, updated during the iterations, which should overestimate $L_{\I_k}$, while $q_k(s)$ is the following quadratic model:
\begin{equation}\label{qk_block}
q_k(s) = f(x_k) + \nabla_{\I_k} f(x_k)^T s + \frac12 s^T \nabla^2_{\I_k} f(x_k)s.
\end{equation}
For the sake of convenience, let us also report the gradient of $m_k(s)$ as follows:
\begin{equation}\label{grad_mk_block}
\nabla m_k(s) = \nabla_{\I_k} f(x_k) + \nabla^2_{\I_k} f(x_k) s + \frac{\sigma_k}2 \norm{s} s.
\end{equation}

\subsection{Approximate minimizers of the cubic model}
At iteration $k$, we compute $s_k$ as an approximate minimizer of the cubic model~\eqref{mk_block}.
In particular, assuming that $\norm{\nabla_{\I_k} f(x_k)} \ne 0$, we require $s_k$ to satisfy two conditions.
The first one is that the first-order stationarity violation
must be sufficiently small compared to $\norm{s_k}^2$, that is,
\begin{equation}\label{sk_gm_block}
\norm{\nabla m_k(s_k)} \le \tau \norm{s_k}^2,
\end{equation}
with a given $\tau \in [0,\infty)$.
The second requirement is that $m_k(s_k)$ must be sufficiently low, that is,
\begin{equation}\label{sk_m_block}
\begin{split}
& m_k(s_k) \le m_k(\hat s_k), \quad \text{where} \\
& \hat s_k = -\hat \alpha_k \nabla_{\I_k} f(x_k) \quad \text{and} \quad \hat \alpha_k = \alphahat,
\end{split}
\end{equation}
with a given $\beta \in (0,1)$, letting the first argument within the above minimum to be $+\infty$ when $\norm{\nabla^2_{\I_k} f(x_k)} = 0$.

We see that condition~\eqref{sk_gm_block} is a straightforward adaptation of those used in~\cite{amaral:2022,birgin:2017,cartis:2019},
while condition~\eqref{sk_m_block} is inspired by the classical Cauchy condition~\cite{cartis:2011a} which requires
$m_k(s_k) \le \min_{\alpha \ge 0} m_k(-\alpha \nabla_{\I_k} f(x_k))$.
In our case, $m_k(s_k)$ is compared to $m_k(\hat s_k)$, hence~\eqref{sk_m_block} is weaker than the Cauchy condition.

In more detail, the role of the two conditions above in the convergence analysis can be outlined as follows:
\begin{itemize}
\item Condition~\eqref{sk_gm_block} allows us to upper bound $\norm{\nabla_{\I_k} f(x_{k+1})}$ by a term proportional to $\norm{s_k}^2$ at each iteration $k$
(see Proposition~\ref{prop:g_ub} below), which is needed for achieving a worst-case iteration complexity of $\bigO{\epsilon^{-3/2}}$
to reduce the stationarity violation over a selected block of variables below $\epsilon$ (see Theorem~\ref{th:compl_ibcn} below).
\item Condition~\eqref{sk_m_block} allows us to lower bound the decrease of both the cubic and quadratic models by a term
depending on $\norm{\nabla_{\I_k} f(x_k)}^2$ at each iteration $k$ (see Proposition~\ref{prop:mk_decr} and Lemma~\ref{lemma:q_decr} below).
This implies a lower bound on the objective decrease of the same order at certain iterations (see Proposition~\ref{prop:f_decr} below),
leading to a worst-case iteration complexity of $\bigO{\epsilon^{-2}}$ to reduce the stationarity violation over all variables below $\epsilon$,
thanks to the greedy rule used for the block selection (see Theorem~\ref{th:compl_ibcn_ggr} below).
\end{itemize}

Note that we can compute a vector $s_k$ satisfying~\eqref{sk_gm_block}--\eqref{sk_m_block} in finite time
without the need of additional evaluations of $f$ or its derivatives in other points.
In particular, we can apply an algorithm to approximately minimize the cubic model~\eqref{mk_block}
(using, e.g., the methods analyzed in~\cite{bianconcini:2015,carmon:2019,gould:2020,nesterov:2022}).
In our experiments, for the inexact minimization of the cubic model~\eqref{mk_block},
we use a Barzilai-Borwein gradient method~\cite{raydan:1997}, which was observed to be effective in practice~\cite{bianconcini:2015}.

Also, observe that~\eqref{sk_gm_block}--\eqref{sk_m_block} are clearly satisfied if $s_k$ is a global minimizer of the cubic model~\eqref{mk_block}
(details on how to compute global minimizers of a cubic model can be found in~\cite{cartis:2011a,cristofari:2019b,nesterov:2006}).

\subsection{Block update}
Once $s_k$ has been computed at iteration $k$, we use standard updating rules inherited from trust-region methods to decide how to set $x_{k+1}$ and $\sigma_{k+1}$ for the next iteration
(see, e.g.,~\cite{birgin:2017,cartis:2011a}). Namely, we compute
\begin{equation}\label{rhok}
\rho_k = \frac{f(x_k) - f(x_k + U_{\I_k} s_k)}{q_k(0) - q_k(s_k)}
\end{equation}
(we will show in Lemma~\ref{lemma:q_decr} below that the denominator is positive whenever $\nabla_{\I_k} f(x_k) \ne 0$) and check if
$\rho_k$ is sufficiently large.
In more detail, using $0 < \eta_1 \le \eta_2 < 1$ and $0 < \gamma_1 \le 1 < \gamma_2 \le \gamma_3$, we set
\begin{equation}\label{x_upd}
x_{k+1} =
\begin{cases}
x_k + U_{\I_k} s_k \quad & \text{if $\rho_k \ge \eta_1$,} \\
x_k \quad & \text{otherwise,}
\end{cases}
\end{equation}
and
\begin{equation}\label{sigma_upd}
\sigma_{k+1} \in
\begin{cases}
[\max\{\sigmamin,\gamma_1 \sigma_k\},\sigma_k] \quad & \text{if $\rho_k \ge \eta_2$,} \\
[\sigma_k, \gamma_2 \sigma_k] \quad & \text{if $\rho_k \in [\eta_1,\eta_2)$,} \\
[\gamma_2 \sigma_k, \gamma_3 \sigma_k] \quad & \text{if $\rho_k < \eta_1$,}
\end{cases}
\end{equation}
with $\sigmamin \in (0,\infty)$.

We also say that $k$ is a \textit{successful} iteration if $x_{k+1} = x_k + U_{\I_k} s_k$, denoting by $\S$ the set of all successful iterations. Namely, recalling~\eqref{x_upd}, we have
\begin{equation}\label{S_iter}
\S := \{k \text{ such that $\rho_k \ge \eta_1$}\}.
\end{equation}

\begin{algorithm}
	\caption{Inexact Block Cubic Newton (\IBCN) method}\label{alg:ibcn}
	\begin{algorithmic}[1]
		\State Given $x_0 \in \Rn$, $\sigmamin > 0$, $\sigma_0 \in [\sigmamin,\infty)$, $0 < \eta_1 \le \eta_2 < 1$, $0 < \gamma_1 \le 1 < \gamma_2 \le \gamma_3$, $\tau \in [0,\infty)$ and $\beta \in (0,1)$
		\While{$\nabla f(x_k) \ne 0$}
		\State compute $\I_k \subseteq \{1,\ldots,n\}$ such that $\norm{\nabla_{\I_k} f(x_k)} \ge \theta \norm{\nabla f(x_k)}$
		\State compute $s_k$ such that
		\begin{gather*}
			\norm{\nabla m_k(s_k)} \le \tau \norm{s_k}^2 \quad \text{and} \quad m_k(s_k) \le m_k(\hat s_k), \\
			\text{where} \\
			\begin{split}
				& \hat s_k = -\hat \alpha_k \nabla_{\I_k} f(x_k), \\ & \hat \alpha_k = \alphahat
			\end{split}
		\end{gather*}
		\State compute
		$\rho_k = \dfrac{f(x_k) - f(x_k + U_{\I_k} s_k)}{q_k(0) - q_k(s_k)}$
		\State set $\displaystyle{x_{k+1} =
			\begin{cases}
				x_k + U_{\I_k} s_k \quad & \text{if $\rho_k \ge \eta_1$} \\
				x_k \quad & \text{otherwise}
		\end{cases}}$
		\State set $\displaystyle{\sigma_{k+1} \in
			\begin{cases}
				[\max\{\sigmamin,\gamma_1 \sigma_k\},\sigma_k] \quad & \text{if $\rho_k \ge \eta_2$} \\
				[\sigma_k, \gamma_2 \sigma_k] \quad & \text{if $\rho_k \in [\eta_1,\eta_2)$} \\
				[\gamma_2 \sigma_k, \gamma_3 \sigma_k] \quad & \text{if $\rho_k < \eta_1$}
		\end{cases}}$
		\EndWhile
	\end{algorithmic}
\end{algorithm}

\section{Convergence analysis}\label{sec:conv}
For the proposed \IBCN\ method, we first establish global convergence to stationary points in Subsection~\ref{subsec:glob_conv},
and then present worst-case iteration complexity results in Subsection~\ref{subsec:complexity}.

\subsection{Global convergence}\label{subsec:glob_conv}
To begin with, for each iteration we can provide a lower bound on the decrease of the cubic model similarly to when using the Cauchy step~\cite{cartis:2011a}.

\begin{proposition}\label{prop:mk_decr}
For every iteration $k$, we have
\[
m_k(0) - m_k(s_k) \ge m_k(0) - m_k(\hat s_k) \ge (1 - \beta) \hat \alpha_k \norm{\nabla_{\I_k} f(x_k)}^2,
\]
where $\hat s_k$ is defined as in~\eqref{sk_m_block}.
\end{proposition}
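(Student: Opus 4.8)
The first inequality is immediate: condition~\eqref{sk_m_block} is exactly $m_k(s_k) \le m_k(\hat s_k)$, so subtracting both sides from $m_k(0)$ reverses the order to give $m_k(0) - m_k(s_k) \ge m_k(0) - m_k(\hat s_k)$. Hence the substance of the proposition lies in the second inequality, which I would establish by evaluating $m_k(0) - m_k(\hat s_k)$ explicitly along the scaled negative-gradient step $\hat s_k = -\hat \alpha_k \nabla_{\I_k} f(x_k)$.

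First I would substitute $\hat s_k$ into~\eqref{mk_block}--\eqref{qk_block}. Using $\norm{\hat s_k} = \hat\alpha_k \norm{\nabla_{\I_k} f(x_k)}$, this yields
\[
m_k(0) - m_k(\hat s_k) = \hat\alpha_k \norm{\nabla_{\I_k} f(x_k)}^2 - \frac{\hat\alpha_k^2}{2}\nabla_{\I_k} f(x_k)^T \nabla^2_{\I_k} f(x_k)\nabla_{\I_k} f(x_k) - \frac{\sigma_k}{6}\hat\alpha_k^3 \norm{\nabla_{\I_k} f(x_k)}^3.
\]
Throughout the while loop $\nabla_{\I_k} f(x_k) \ne 0$ by the greedy rule~\eqref{ggr}, and $\hat\alpha_k > 0$ as a minimum of positive quantities, so I may move $(1-\beta)\hat\alpha_k\norm{\nabla_{\I_k} f(x_k)}^2$ to the left and divide by the positive factor $\hat\alpha_k \norm{\nabla_{\I_k} f(x_k)}^2$. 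The target inequality then becomes equivalent to
\[
\frac{\hat\alpha_k}{2}\,\frac{\nabla_{\I_k} f(x_k)^T\nabla^2_{\I_k} f(x_k)\nabla_{\I_k} f(x_k)}{\norm{\nabla_{\I_k} f(x_k)}^2} + \frac{\sigma_k}{6}\hat\alpha_k^2 \norm{\nabla_{\I_k} f(x_k)} \le \beta.
\]
Bounding the Rayleigh quotient of the symmetric matrix $\nabla^2_{\I_k} f(x_k)$ by its spectral norm, it suffices to prove $\tfrac{\hat\alpha_k}{2}\norm{\nabla^2_{\I_k} f(x_k)} + \tfrac{\sigma_k}{6}\hat\alpha_k^2\norm{\nabla_{\I_k} f(x_k)} \le \beta$.

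The key observation is that $\hat\alpha_k$ is defined precisely so that each of the two terms is bounded by $\beta/2$. Since $\hat\alpha_k \le \beta/\norm{\nabla^2_{\I_k} f(x_k)}$, the first term is at most $\beta/2$ (and when $\norm{\nabla^2_{\I_k} f(x_k)} = 0$ the first argument of the minimum is $+\infty$ by convention, and this term vanishes outright). Since $\hat\alpha_k \le \sqrt{3\beta/(\sigma_k\norm{\nabla_{\I_k} f(x_k)})}$, squaring gives $\hat\alpha_k^2 \le 3\beta/(\sigma_k\norm{\nabla_{\I_k} f(x_k)})$, so the second term is at most $\tfrac{\sigma_k}{6}\cdot\tfrac{3\beta}{\sigma_k\norm{\nabla_{\I_k} f(x_k)}}\cdot\norm{\nabla_{\I_k} f(x_k)} = \beta/2$. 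Adding the two bounds gives the required inequality.

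I do not expect a genuine obstacle here: the result is essentially a verification that the two-branch definition of $\hat\alpha_k$ balances the quadratic and cubic contributions, each branch neutralizing exactly one term. The only points needing minor care are the orientation bookkeeping in deriving the first displayed inequality from $m_k(s_k) \le m_k(\hat s_k)$, the spectral-norm bound on the curvature term (legitimate because $\nabla^2_{\I_k} f(x_k)$ is symmetric), and the degenerate case $\norm{\nabla^2_{\I_k} f(x_k)} = 0$, handled by the $+\infty$ convention in~\eqref{sk_m_block}.
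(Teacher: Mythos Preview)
Your proposal is correct and follows essentially the same approach as the paper: expand $m_k(0)-m_k(\hat s_k)$ along the scaled negative-gradient step, bound the curvature term by the spectral norm, and use the two branches of the definition of $\hat\alpha_k$ to cap each of the quadratic and cubic contributions by $\beta/2$. The only cosmetic difference is that the paper factors out $\hat\alpha_k\norm{\nabla_{\I_k} f(x_k)}^2$ directly rather than moving terms across and dividing, but the underlying argument is identical.
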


\begin{proof}
The first inequality of the thesis follows from~\eqref{sk_m_block},
so we only have to show the second inequality.
To this aim, recalling the definitions of $m_k$ and $\nabla m_k$ from~\eqref{mk_block} and~\eqref{grad_mk_block}, respectively, we can write
\[
\begin{split}
m_k(0) - m_k(\hat s_k) & = f(x_k) - m_k (-\hat \alpha_k \nabla_{\I_k} f(x_k)) \\
& = \hat \alpha_k \norm{\nabla_{\I_k} f(x_k)}^2 - \frac{\hat \alpha_k^2}2 \nabla_{\I_k} f(x_k)^T \nabla^2_{\I_k} f(x_k) \nabla_{\I_k} f(x_k) + \\
& \quad - \frac{\hat \alpha_k^3 \sigma_k}6 \norm{\nabla_{\I_k} f(x_k)}^3 \\
& \ge \hat \alpha_k \norm{\nabla_{\I_k} f(x_k)}^2 \Bigl(1 - \frac{\hat \alpha_k\norm{\nabla^2_{\I_k} f(x_k)}}2 - \frac{\hat \alpha_k^2 \sigma_k \norm{\nabla_{\I_k} f(x_k)}}6 \Bigr).
\end{split}
\]
From the definition of $\hat \alpha_k$ given in~\eqref{sk_m_block}, it follows that
\[
1 - \frac{\hat \alpha_k \norm{\nabla^2_{\I_k} f(x_k)}}2 - \frac{\hat \alpha_k^2 \sigma_k \norm{\nabla_{\I_k} f(x_k)}}6 \ge 1 - \frac{\beta}2 - \frac{\beta}2 = 1 - \beta.
\]
Then, the desired result follows.
\end{proof}

Using the above proposition, we can easily lower bound the decrease of the quadratic model at every iteration as follows.

\begin{lemma}\label{lemma:q_decr}
For every iteration $k$, we have
\[
q_k(0) - q_k(s_k) \ge (1 - \beta) \hat \alpha_k \norm{\nabla_{\I_k} f(x_k)}^2 + \frac{\sigma_k}6 \norm{s_k}^3.
\]
\end{lemma}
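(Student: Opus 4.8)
The plan is to exploit the fact that the cubic model $m_k$ and the quadratic model $q_k$ differ only by the regularization term $\frac{\sigma_k}{6}\norm{s}^3$, so that the decrease of $q_k$ can be read off almost immediately from the decrease of $m_k$ already established in Proposition~\ref{prop:mk_decr}. In other words, the entire content of the lemma is carried by that previous result, and what remains is only to isolate the quadratic part of the model decrease.

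Concretely, I would first observe that at $s=0$ the cubic term vanishes, so that $m_k(0) = q_k(0)$, whereas evaluating the definition~\eqref{mk_block} at $s_k$ gives $m_k(s_k) = q_k(s_k) + \frac{\sigma_k}{6}\norm{s_k}^3$. Subtracting these two identities yields the elementary rearrangement
\[
q_k(0) - q_k(s_k) = \bigl(m_k(0) - m_k(s_k)\bigr) + \frac{\sigma_k}{6}\norm{s_k}^3 .
\]
I would then apply Proposition~\ref{prop:mk_decr}, which bounds the first term on the right-hand side from below by $(1-\beta)\hat\alpha_k \norm{\nabla_{\I_k} f(x_k)}^2$. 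Since $\frac{\sigma_k}{6}\norm{s_k}^3 \ge 0$ (indeed, it is kept as a separate additive term rather than discarded), substituting this lower bound immediately gives the claimed inequality.

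There is no genuine obstacle in this argument: once the identity relating $m_k$ and $q_k$ at the relevant points is written down, the result is a direct consequence of the previously proved model-decrease bound. The only point worth stating cleanly is that the $\frac{\sigma_k}{6}\norm{s_k}^3$ contribution is retained in the final estimate rather than being absorbed or dropped, since it will presumably be needed as an independent source of decrease in the subsequent complexity analysis.
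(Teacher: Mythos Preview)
Your proposal is correct and follows essentially the same approach as the paper: write $q_k(0)-q_k(s_k)=m_k(0)-m_k(s_k)+\frac{\sigma_k}{6}\norm{s_k}^3$ and apply Proposition~\ref{prop:mk_decr}. The paper's proof is the same two-line argument, only more terse.
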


\begin{proof}
For any iteration $k$, from~\eqref{qk_block} and~\eqref{mk_block} it follows that
\[
q_k(0) - q_k(s_k) = m_k(0) - m_k(s_k) + \frac{\sigma_k}6 \norm{s_k}^3.
\]
Hence, the desired result is obtained by using Proposition~\ref{prop:mk_decr}.
\end{proof}

In the following two propositions we show how, for every successful iteration $k$,
we can lower bound $(f(x_k) - f(x_{k+1}))$ and upper bound $\norm{\nabla_{\I_k} f(x_{k+1})}$.

\begin{proposition}\label{prop:f_decr}
For every iteration $k \in \S$, we have
\[
f(x_k) - f(x_{k+1}) \ge \eta_1 \biggl((1 - \beta) \hat \alpha_k \norm{\nabla_{\I_k} f(x_k)}^2 + \frac{\sigma_k}6 \norm{s_k}^3\biggr).
\]
\end{proposition}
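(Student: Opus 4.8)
The plan is to use the definition of a successful iteration together with the lower bound on the quadratic model decrease already established in Lemma~\ref{lemma:q_decr}. First I would recall that, since $k \in \S$, we have $\rho_k \ge \eta_1$ by the definition~\eqref{S_iter} of the set of successful iterations, and moreover $x_{k+1} = x_k + U_{\I_k} s_k$ by the update rule~\eqref{x_upd}. This lets me express the actual decrease $f(x_k) - f(x_{k+1})$ as $f(x_k) - f(x_k + U_{\I_k} s_k)$, which is precisely the numerator appearing in the definition~\eqref{rhok} of $\rho_k$.

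Next I would rewrite the numerator of $\rho_k$ in terms of $\rho_k$ itself and the denominator. From~\eqref{rhok} we have $f(x_k) - f(x_k + U_{\I_k} s_k) = \rho_k \bigl(q_k(0) - q_k(s_k)\bigr)$. The key point here is that the denominator $q_k(0) - q_k(s_k)$ is nonnegative: by Lemma~\ref{lemma:q_decr} it is bounded below by $(1-\beta)\hat\alpha_k \norm{\nabla_{\I_k} f(x_k)}^2 + \frac{\sigma_k}{6}\norm{s_k}^3 \ge 0$, since $\beta \in (0,1)$, $\hat\alpha_k > 0$, $\sigma_k > 0$, and all the norms are nonnegative. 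Because this quantity is nonnegative, multiplying it by the inequality $\rho_k \ge \eta_1$ preserves the direction of the inequality, giving
\[
f(x_k) - f(x_{k+1}) = \rho_k \bigl(q_k(0) - q_k(s_k)\bigr) \ge \eta_1 \bigl(q_k(0) - q_k(s_k)\bigr).
\]

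Finally I would substitute the lower bound from Lemma~\ref{lemma:q_decr} into the right-hand side to obtain exactly the claimed inequality
\[
f(x_k) - f(x_{k+1}) \ge \eta_1 \biggl((1-\beta)\hat\alpha_k \norm{\nabla_{\I_k} f(x_k)}^2 + \frac{\sigma_k}{6}\norm{s_k}^3\biggr).
\]
I do not anticipate any serious obstacle in this argument, as it is a direct chaining of the definitions with the previously established model-decrease bound. The one point requiring a moment of care is the sign of the denominator $q_k(0) - q_k(s_k)$: the step of propagating $\rho_k \ge \eta_1$ through the multiplication is only valid because Lemma~\ref{lemma:q_decr} guarantees that this denominator is nonnegative, so I would state that nonnegativity explicitly before multiplying.
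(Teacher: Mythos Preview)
Your proposal is correct and follows essentially the same approach as the paper: use $k\in\S$ to get $\rho_k\ge\eta_1$ and $x_{k+1}=x_k+U_{\I_k}s_k$, then combine the definition of $\rho_k$ with Lemma~\ref{lemma:q_decr}. Your explicit check that $q_k(0)-q_k(s_k)\ge 0$ before multiplying through by $\rho_k\ge\eta_1$ is exactly the care the paper takes (it notes just before~\eqref{rhok} that the denominator is positive by Lemma~\ref{lemma:q_decr}).
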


\begin{proof}
Take any $k \in \S$.
From the instructions of the algorithm and the definition of $\S$ given in~\eqref{S_iter}, we have that $\rho_k \ge \eta_1$ and $x_{k+1} = x_k + U_{\I_k} s_k$.
Recalling the definition of $\rho_k$ given in~\eqref{rhok},
then the desired result follows from Lemma~\ref{lemma:q_decr}.
\end{proof}

\begin{remark}\label{rem:f_decr}
According to the instructions of the algorithm and the definition of $\S$ given in~\eqref{S_iter}, we have that $x_{k+1}=x_k$ when $k \notin \S$. Then, using
Proposition~\ref{prop:f_decr}, it follows that the sequence $\{f(x_k)\}$ is monotonically non-increasing.
\end{remark}

\begin{proposition}\label{prop:g_ub}
For every iteration $k \in \S$, we have
\[
\norm{\nabla_{\I_k} f(x_{k+1})} \le \Bigl(\tau + \frac{\sigma_k+L_{\I_k}}2\Bigr) \norm{s_k}^2.
\]
\end{proposition}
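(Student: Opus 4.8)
The plan is to control $\norm{\nabla_{\I_k} f(x_{k+1})}$ by combining the block Lipschitz estimate from Proposition~\ref{prop:lips_block} with the approximate-stationarity condition~\eqref{sk_gm_block} imposed on $s_k$. Since $k \in \S$, the update rule~\eqref{x_upd} gives $x_{k+1} = x_k + U_{\I_k} s_k$, so the quantity to bound is precisely $\norm{\nabla_{\I_k} f(x_k + U_{\I_k} s_k)}$.

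First I would introduce the exact first-order remainder for the block gradient, writing
\[
\nabla_{\I_k} f(x_k + U_{\I_k} s_k) = \bigl[\nabla_{\I_k} f(x_k + U_{\I_k} s_k) - \nabla_{\I_k} f(x_k) - \nabla^2_{\I_k} f(x_k) s_k\bigr] + \bigl[\nabla_{\I_k} f(x_k) + \nabla^2_{\I_k} f(x_k) s_k\bigr].
\]
The bracketed remainder is bounded in norm by $\frac{L_{\I_k}}2 \norm{s_k}^2$ thanks to~\eqref{ineq_lips_block}, applied with $\I = \I_k$, $x = x_k$, $s = s_k$.

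Next I would rewrite the second bracket in terms of the model gradient. Recalling the expression~\eqref{grad_mk_block} for $\nabla m_k$, one has $\nabla_{\I_k} f(x_k) + \nabla^2_{\I_k} f(x_k) s_k = \nabla m_k(s_k) - \frac{\sigma_k}2 \norm{s_k} s_k$. Applying the triangle inequality and using the stationarity condition $\norm{\nabla m_k(s_k)} \le \tau \norm{s_k}^2$ from~\eqref{sk_gm_block}, together with the fact that the regularization term has norm $\frac{\sigma_k}2 \norm{s_k}^2$, the three contributions add up to $\bigl(\tau + \frac{\sigma_k + L_{\I_k}}2\bigr)\norm{s_k}^2$, which is exactly the claimed bound.

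There is no real obstacle here: the argument is a direct triangle-inequality estimate once the correct decomposition is identified, and every ingredient (the block Lipschitz bound, the form of $\nabla m_k$, and the tolerance condition on $s_k$) is already available. The only point requiring minor care is matching the cubic-regularization term $\frac{\sigma_k}2 \norm{s_k} s_k$ appearing in $\nabla m_k$, so that its norm collapses cleanly to $\frac{\sigma_k}2 \norm{s_k}^2$ and combines with the Lipschitz constant $L_{\I_k}$ into the factor $\frac{\sigma_k + L_{\I_k}}2$.
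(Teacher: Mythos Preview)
Your proposal is correct and follows essentially the same route as the paper: the paper also uses $x_{k+1}=x_k+U_{\I_k}s_k$, splits $\nabla_{\I_k} f(x_{k+1})$ via the triangle inequality into the Taylor remainder term and $\nabla_{\I_k} f(x_k)+\nabla^2_{\I_k} f(x_k)s_k$, bounds the former by~\eqref{ineq_lips_block} and the latter by rewriting it as $\nabla m_k(s_k)-\tfrac{\sigma_k}{2}\norm{s_k}s_k$ and applying~\eqref{sk_gm_block}. No gaps.
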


\begin{proof}
Take any $k \in \S$. First, we can write
\begin{equation}\label{ineq_g_1}
\norm{\nabla_{\I_k} f(x_{k+1})} \le
\norm{\nabla_{\I_k} f(x_k) + \nabla^2_{\I_k} f(x_k) s_k} + \norm{\nabla_{\I_k} f(x_{k+1}) - \nabla_{\I_k} f(x_k) - \nabla^2_{\I_k} f(x_k) s_k}.
\end{equation}
Using~\eqref{grad_mk_block}, we can upper bound the first norm in the right-hand side of~\eqref{ineq_g_1} as follows:
\begin{equation}\label{ineq_g_2}
\begin{split}
\norm{\nabla_{\I_k} f(x_k) + \nabla^2_{\I_k} f(x_k) s_k} & =
\norm[\Big]{\nabla m_k(s_k) - \frac{\sigma_k}2 \norm{s_k}s_k} \\
& \le \norm{\nabla m_k(s_k)} + \frac{\sigma_k}2 \norm{s_k}^2 \\
& \le \Bigl(\tau + \frac{\sigma_k}2\Bigr) \norm{s_k}^2,
\end{split}
\end{equation}
where the last inequality follows from~\eqref{sk_gm_block}.
From the instructions of the algorithm and the definition of $\S$ given in~\eqref{S_iter}, we have that $x_{k+1} = x_k + U_{\I_k} s_k$.
So, using~\eqref{ineq_lips_block},
we can also upper bound the second norm in the right-hand side of~\eqref{ineq_g_1} as follows:
\begin{equation}\label{ineq_g_3}
\norm{\nabla_{\I_k} f(x_{k+1}) - \nabla_{\I_k} f(x_k) - \nabla^2_{\I_k} f(x_k) s_k} \le \frac{L_{\I_k}}2 \norm{s_k}^2.
\end{equation}
Then, the desired result follows from~\eqref{ineq_g_1}, \eqref{ineq_g_2} and~\eqref{ineq_g_3}.
\end{proof}

Now, we want to relate the total number of iterations to the successful ones,
which will be obtained in Proposition~\ref{prop:U_ub} below.
To get such a result, we have to pass through a few intermediate steps.
First we show that $\sigma_{k+1} \le \sigma_k$ whenever $\sigma_k$ is an appropriate overestimate of $L_{\I_k}$.

\begin{proposition}\label{prop:sigmak_S}
Assume that, for an iteration $k \ge 0$, we have
\[
\sigma_k \ge \frac{L_{\I_k}}{1-\eta_2}.
\]
Then, $\sigma_{k+1} \le \sigma_k$.
\end{proposition}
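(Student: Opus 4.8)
The plan is to reduce the claim to the single inequality $\rho_k \ge \eta_2$. Indeed, inspecting the update rule~\eqref{sigma_upd}, the only branch that forces $\sigma_{k+1} \le \sigma_k$ is the first one, which is triggered precisely when $\rho_k \ge \eta_2$ (in the other two branches $\sigma_{k+1}$ may strictly exceed $\sigma_k$). Before anything else, I would note that $\rho_k$ is well defined: since the algorithm iterates only while $\nabla f(x_k) \ne 0$, the greedy selection rule~\eqref{ggr} gives $\norm{\nabla_{\I_k} f(x_k)} \ge \theta \norm{\nabla f(x_k)} > 0$, so Lemma~\ref{lemma:q_decr} guarantees that the denominator $q_k(0) - q_k(s_k)$ of~\eqref{rhok} is strictly positive. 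Hence it suffices to establish
\[
f(x_k) - f(x_k + U_{\I_k} s_k) \ge \eta_2 \bigl(q_k(0) - q_k(s_k)\bigr).
\]

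The heart of the argument is to rewrite the gap between the two sides. Recalling from~\eqref{qk_block} that $q_k(0) = f(x_k)$, I would observe that
\[
\bigl(q_k(0) - q_k(s_k)\bigr) - \bigl(f(x_k) - f(x_k + U_{\I_k} s_k)\bigr) = f(x_k + U_{\I_k} s_k) - q_k(s_k),
\]
which is exactly the model error controlled by Proposition~\ref{prop:lips_block}. In particular, inequality~\eqref{ub_lips_block} yields
\[
f(x_k + U_{\I_k} s_k) - q_k(s_k) \le \frac{L_{\I_k}}6 \norm{s_k}^3.
\]
Thus, to conclude $\rho_k \ge \eta_2$ it is enough to show that this cubic remainder is dominated by $(1-\eta_2)\bigl(q_k(0) - q_k(s_k)\bigr)$.

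For the final step I would discard the (nonnegative) first term in the bound of Lemma~\ref{lemma:q_decr} and keep only the regularization contribution, namely $q_k(0) - q_k(s_k) \ge \frac{\sigma_k}6 \norm{s_k}^3$. Multiplying by $(1-\eta_2)$ and invoking the hypothesis $\sigma_k \ge L_{\I_k}/(1-\eta_2)$, i.e.\ $(1-\eta_2)\sigma_k \ge L_{\I_k}$, gives
\[
(1-\eta_2)\bigl(q_k(0) - q_k(s_k)\bigr) \ge (1-\eta_2)\frac{\sigma_k}6 \norm{s_k}^3 \ge \frac{L_{\I_k}}6 \norm{s_k}^3 \ge f(x_k + U_{\I_k} s_k) - q_k(s_k),
\]
which rearranges to the desired $\rho_k \ge \eta_2$, and the conclusion $\sigma_{k+1} \le \sigma_k$ then follows from~\eqref{sigma_upd}. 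There is no serious obstacle here; the only delicate points are bookkeeping — matching the numerator/denominator discrepancy of $\rho_k$ with the cubic model error from~\eqref{ub_lips_block}, and remembering that the regularization term in Lemma~\ref{lemma:q_decr} provides exactly the $\norm{s_k}^3$ lower bound needed to absorb that error once $\sigma_k$ overestimates $L_{\I_k}$ by the factor $1/(1-\eta_2)$.
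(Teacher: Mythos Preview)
Your proof is correct and follows essentially the same route as the paper: both arguments combine the cubic upper bound~\eqref{ub_lips_block} on $f(x_k+U_{\I_k}s_k)-q_k(s_k)$ with the lower bound $q_k(0)-q_k(s_k)\ge\frac{\sigma_k}{6}\norm{s_k}^3$ from Lemma~\ref{lemma:q_decr}, and then invoke the hypothesis $(1-\eta_2)\sigma_k\ge L_{\I_k}$ to conclude $\rho_k\ge\eta_2$. The only cosmetic difference is that the paper bounds the ratio $1-\rho_k\le L_{\I_k}/\sigma_k$ directly, whereas you rearrange the same inequality in additive form; your explicit remark on the well-definedness of $\rho_k$ is a nice addition.
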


\begin{proof}
From~\eqref{qk_block}, we can write
\begin{equation}\label{ineq_m_decr_1}
-\nabla_{\I_k} f(x_k)^T s_k - \frac12 s_k^T \nabla^2_{\I_k} f(x_k) s_k = q_k(0) - q_k(s_k).
\end{equation}
Using Lemma~\ref{lemma:q_decr}, it follows that
\begin{equation}\label{ineq_m_decr_2}
-\nabla_{\I_k} f(x_k)^T s_k - \frac12 s_k^T \nabla^2_{\I_k} f(x_k) s_k \ge \frac{\sigma_k}6 \norm{s_k}^3.
\end{equation}
Using~\eqref{ineq_m_decr_1} and~\eqref{ub_lips_block}, from the definition of $\rho_k$ given in~\eqref{rhok} we obtain
\[
1- \rho_k = \frac{-\nabla_{\I_k} f(x_k)^T s_k - \frac12 s_k^T \nabla^2_{\I_k} f(x_k) s_k - f(x_k) + f(x_k + U_{\I_k} s_k)}
                 {-\nabla_{\I_k} f(x_k)^T s_k - \frac12 s_k^T \nabla^2_{\I_k} f(x_k) s_k} \le \frac{L_{\I_k}}{\sigma_k},
\]
where, in the last inequality, we have used~\eqref{ub_lips_block} and~\eqref{ineq_m_decr_2}
to upper bound the numerator by $(L_{\I_k}/6) \norm{s_k}^3$ and lower bound the denominator by $(\sigma_k/6) \norm{s_k}^3$, respectively.
Since $\sigma_k \ge L_{\I_k}/(1-\eta_2)$ by hypothesis, it follows that $1 - \rho_k \le 1 - \eta_2$,
that is, $\rho_k \ge \eta_2$. Then the desired result follows from the updating rule of $\sigma_k$ given in~\eqref{sigma_upd}.
\end{proof}

In the following proposition, we show that $\sigma_k$ stays bounded during the iterations.

\begin{proposition}\label{prop:sigma_bounds}
For every iteration $k\ge 0$, it holds that
\[
\sigmamin \le \sigma_k \le \sigmamax,
\]
where $\sigmamax := \max\biggl\{\sigma_0,\dfrac{\gamma_3 L}{1-\eta_2}\biggr\}$.
\end{proposition}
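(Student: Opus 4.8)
The plan is to prove the two inequalities separately by induction on $k$, with the base case $\sigmamin \le \sigma_0 \le \sigmamax$ holding by the initialization (line~1 of Algorithm~\ref{alg:ibcn}) and by the definition of $\sigmamax$.

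For the lower bound, I would argue directly from the updating rule~\eqref{sigma_upd}. Assuming $\sigma_k \ge \sigmamin$, I inspect each of the three branches: in the first branch $\sigma_{k+1}$ is at least $\max\{\sigmamin,\gamma_1\sigma_k\} \ge \sigmamin$; in the second and third branches $\sigma_{k+1} \ge \sigma_k \ge \sigmamin$ (recalling $\gamma_2 > 1$). Hence $\sigma_{k+1} \ge \sigmamin$ in every case, completing the induction for the lower bound.

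For the upper bound, the key preliminary observation is that~\eqref{sigma_upd} always yields $\sigma_{k+1} \le \gamma_3 \sigma_k$, since in the first branch $\sigma_{k+1} \le \sigma_k$, in the second branch $\sigma_{k+1} \le \gamma_2 \sigma_k \le \gamma_3 \sigma_k$, and in the third branch $\sigma_{k+1} \le \gamma_3 \sigma_k$ directly. Assuming $\sigma_k \le \sigmamax$, I would split into two cases according to the size of $\sigma_k$. If $\sigma_k < L/(1-\eta_2)$, then $\sigma_{k+1} \le \gamma_3 \sigma_k < \gamma_3 L/(1-\eta_2) \le \sigmamax$. If instead $\sigma_k \ge L/(1-\eta_2)$, then, invoking the uniform bound $L_{\I_k} \le L$ from~\eqref{L_bounds}, we have $\sigma_k \ge L/(1-\eta_2) \ge L_{\I_k}/(1-\eta_2)$, so Proposition~\ref{prop:sigmak_S} applies and gives $\sigma_{k+1} \le \sigma_k \le \sigmamax$. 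In both cases $\sigma_{k+1} \le \sigmamax$, which closes the induction.

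I do not expect a genuine obstacle here, as this is the standard boundedness-of-the-regularization argument inherited from trust-region and cubic Newton schemes. The only point requiring care is the case split in the upper-bound step: Proposition~\ref{prop:sigmak_S} is phrased in terms of the block-dependent constant $L_{\I_k}$, so the essential move is to replace it by the uniform constant $L$ via~\eqref{L_bounds}, thereby obtaining a threshold $L/(1-\eta_2)$ that is independent of which block $\I_k$ the greedy rule selects. This is precisely what allows the factor $\gamma_3 L/(1-\eta_2)$ to appear in $\sigmamax$, and ensures that once $\sigma_k$ exceeds the threshold it cannot grow further.
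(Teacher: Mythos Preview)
Your argument is correct and mirrors the paper's proof: the paper condenses the upper-bound case split into the single recursive inequality $\sigma_{k+1}\le\max\{\sigma_k,\gamma_3 L_{\I_k}/(1-\eta_2)\}$, derived exactly from Proposition~\ref{prop:sigmak_S} together with the universal bound $\sigma_{k+1}\le\gamma_3\sigma_k$, and then replaces $L_{\I_k}$ by $L$ via~\eqref{L_bounds}. The lower bound is likewise dismissed as immediate from the update rule, so your more explicit branch-by-branch induction is the same argument spelled out in full.
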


\begin{proof}
From Proposition~\ref{prop:sigmak_S} and the fact that, by the instructions of the algorithm, $\sigma_{k+1}$ is increased from $\sigma_k$
at most by a factor of $\gamma_3 > 1$, it follows that
\[
\sigma_{k+1} \le \max\biggl\{\sigma_k,\frac{\gamma_3 L_{\I_k}}{1-\eta_2}\biggr\} \quad \forall k \ge 0.
\]
Applying this inequality recursively and upper bounding $L_{\I_k}$ by~\eqref{L_bounds}, we get the desired upper bound on $\sigma_k$,
while the lower bound follows immediately from the instructions of the algorithm.
\end{proof}

We can now upper bound the total number of unsuccessful iterations up to the current iteration.

\begin{proposition}\label{prop:U_ub}
For every iteration $k\ge 0$, let $\S_k$ be the set of successful iterations up to $k$, that is, $\S_k := \S \cap \{i \le k\}$.
Then,
\[
k+1 \le \biggl(1 + \frac{\abs{\log \gamma_1}}{\log \gamma_2}\biggr)\abs{\S_k} + \frac 1{\log \gamma_2} \log\biggl(\frac{\sigmamax}{\sigma_0}\biggr)
\]
\end{proposition}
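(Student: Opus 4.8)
The plan is to bound the number of \emph{unsuccessful} iterations by exploiting the uniform upper bound $\sigmamax$ on $\sigma_k$ established in Proposition~\ref{prop:sigma_bounds}, and then to combine this with the trivial fact that every iteration up to $k$ is either successful or unsuccessful. Denote by $\mathcal U_k := \{i : i \le k\} \setminus \S$ the set of unsuccessful iterations up to $k$, so that $|\S_k| + |\mathcal U_k| = k+1$. First I would read off from the updating rule~\eqref{sigma_upd} a one-sided multiplicative bound on $\sigma$ at each step. When $k \in \S$, either $\rho_k \ge \eta_2$, giving $\sigma_{k+1} \ge \gamma_1 \sigma_k$, or $\rho_k \in [\eta_1,\eta_2)$, giving $\sigma_{k+1} \ge \sigma_k \ge \gamma_1 \sigma_k$ (using $\gamma_1 \le 1$); in both cases $\sigma_{k+1} \ge \gamma_1 \sigma_k$. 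When $k \notin \S$ we have $\rho_k < \eta_1$, hence $\sigma_{k+1} \ge \gamma_2 \sigma_k$.

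Next I would telescope the quotient $\sigma_{k+1}/\sigma_0 = \prod_{i=0}^{k}(\sigma_{i+1}/\sigma_i)$ and pass to logarithms, splitting the sum over successful and unsuccessful indices:
\[
\log\sigma_{k+1} - \log\sigma_0 = \sum_{i \in \S_k}\log\frac{\sigma_{i+1}}{\sigma_i} + \sum_{i \in \mathcal U_k}\log\frac{\sigma_{i+1}}{\sigma_i}.
\]
Applying the per-step bounds above, each successful term is at least $\log\gamma_1 = -|\log\gamma_1| \le 0$ and each unsuccessful term is at least $\log\gamma_2 > 0$, so that
\[
|\S_k|\log\gamma_1 + |\mathcal U_k|\log\gamma_2 \le \log\sigma_{k+1} - \log\sigma_0.
\]

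I would then invoke Proposition~\ref{prop:sigma_bounds} to replace the right-hand side by its ceiling, using $\log\sigma_{k+1} \le \log\sigmamax$. Solving the resulting inequality for $|\mathcal U_k|$ (legitimate since $\log\gamma_2 > 0$) gives
\[
|\mathcal U_k| \le \frac{|\S_k|\,|\log\gamma_1|}{\log\gamma_2} + \frac{1}{\log\gamma_2}\log\frac{\sigmamax}{\sigma_0},
\]
and substituting this into $k+1 = |\S_k| + |\mathcal U_k|$ yields the claimed bound, with the logarithmic constant appearing as $\tfrac{1}{\log\gamma_2}\log(\sigmamax/\sigma_0) = -\tfrac{1}{\log\gamma_2}\log(\sigma_0/\sigmamax)$.

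The calculation involves no delicate estimate; the only point that requires care is the sign bookkeeping. Since $\gamma_1 \le 1 < \gamma_2$, a successful iteration may \emph{decrease} $\sigma$ and therefore contributes a nonpositive logarithmic term, which must be carried over to the side of $|\S_k|$ and accounts for the coefficient $\bigl(1 + |\log\gamma_1|/\log\gamma_2\bigr)$; meanwhile the unsuccessful iterations are precisely those forcing a geometric growth of $\sigma$ by a factor of at least $\gamma_2$, so that the uniform ceiling $\sigmamax$ caps their number. Keeping the direction of each inequality straight through the telescoping step is the main thing to watch.
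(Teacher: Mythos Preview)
Your argument is correct and is precisely the standard telescoping proof to which the paper defers by citing \cite[Lemma~2.4]{birgin:2017}. One remark: your derivation correctly produces the additive constant $\tfrac{1}{\log\gamma_2}\log(\sigmamax/\sigma_0)\ge 0$, whereas the statement as printed has the ratio inverted (which would make the constant nonpositive and the bound false already when all iterations up to $k$ are unsuccessful); this appears to be a typographical slip in the paper, and your sign is the right one.
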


\begin{proof}
It is identical to~\cite[Lemma 2.4]{birgin:2017}, where the same rule is used to update $\sigma_k$ during the iterations.
\end{proof}

To give worst-case iteration complexity bounds, we need an assumption on the boundedness of $f$ and $\norm{\nabla^2 f}$ over the following level set:
\begin{equation}\label{LS}
\LS = \{x \in \Rn \colon f(x) \le f(x_0)\}.
\end{equation}

\begin{assumption}\label{assump:bound}
Two finite constants $\fmin$ and $\Hmax$ exist such that, for all $x \in \LS$, we have
$f(x) \ge \fmin$ and $\norm{\nabla^2 f(x)} \le \Hmax$, where $\LS$ is defined as in~\eqref{LS}.
\end{assumption}
We see that Assumption~\ref{assump:bound} is satisfied if $\LS$ is compact.
Note also that, since $\{f(x_k)\}$ is monotonically non-increasing from Remark~\ref{rem:f_decr}, then
$\{x_k\} \subseteq \LS$. It follows that, under Assumption~\ref{assump:bound}, we have
\begin{align}
f(x_k) \ge \fmin & \quad \forall k \ge 0, \label{fmin_k} \\
\norm{\nabla^2_{\I_k} f(x_k)} \le \Hmax & \quad \forall k \ge 0. \label{H_bounded_k}
\end{align}

Under Assumption~\ref{assump:bound}, we can now relate the objective decrease at successful iterations to the amount of first-order stationarity violation.

\begin{proposition}\label{prop:f_decr_eps}
Given $\epsilon \in [0,1]$, if Assumption~\ref{assump:bound} holds, then
\[
f(x_k) - f(x_{k+1}) \ge c_1 \epsilon^2 \quad \forall k \in \S \colon \norm{ \nabla f(x_k)} \ge \epsilon,
\]
where
\[
c_1 = \theta \eta_1 (1 - \beta) \min\biggl\{\frac{\theta \beta}\Hmax, \sqrt{\dfrac{3 \theta \beta}{\sigmamax}}\biggr\}
\]
and $\sigmamax$ is given in Proposition~\ref{prop:sigma_bounds}.
\end{proposition}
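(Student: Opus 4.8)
The plan is to start from the per-iteration objective decrease already established in Proposition~\ref{prop:f_decr}. Since the cubic term $\frac{\sigma_k}{6}\norm{s_k}^3$ appearing there is non-negative, I would immediately discard it to obtain the cleaner bound
\[
f(x_k) - f(x_{k+1}) \ge \eta_1(1-\beta)\,\hat\alpha_k\,\norm{\nabla_{\I_k} f(x_k)}^2 \qquad \forall k \in \S.
\]
The whole task then reduces to lower bounding the single product $\hat\alpha_k\,\norm{\nabla_{\I_k} f(x_k)}^2$ by a constant multiple of $\epsilon^2$.

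Next I would expand $\hat\alpha_k$ using its definition in~\eqref{sk_m_block} and distribute the factor $\norm{\nabla_{\I_k} f(x_k)}^2$ inside the minimum, obtaining
\[
\hat\alpha_k\,\norm{\nabla_{\I_k} f(x_k)}^2 = \min\biggl\{\frac{\beta\,\norm{\nabla_{\I_k} f(x_k)}^2}{\norm{\nabla^2_{\I_k} f(x_k)}},\ \norm{\nabla_{\I_k} f(x_k)}^{3/2}\sqrt{\frac{3\beta}{\sigma_k}}\biggr\}.
\]
Rather than splitting into cases, I would lower bound each of the two arguments separately and use that the minimum of two lower bounds is a lower bound on the minimum. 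The three ingredients are: the greedy selection rule~\eqref{ggr} together with $\norm{\nabla f(x_k)} \ge \epsilon$, which yields $\norm{\nabla_{\I_k} f(x_k)} \ge \theta\epsilon$; the Hessian bound~\eqref{H_bounded_k}, which yields $\norm{\nabla^2_{\I_k} f(x_k)} \le \Hmax$; and the regularization bound of Proposition~\ref{prop:sigma_bounds}, which yields $\sigma_k \le \sigmamax$. Substituting these into the first argument gives a lower bound of $\theta\cdot(\theta\beta/\Hmax)\,\epsilon^2$, matching (up to the outer factor of $\theta$) the first entry inside the bracket of $c_1$.

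The step I expect to require the most care is the second argument, which naturally produces an $\epsilon^{3/2}$ term: substituting the bounds yields $(\theta\epsilon)^{3/2}\sqrt{3\beta/\sigmamax}$. Here I would invoke the hypothesis $\epsilon \in [0,1]$ to replace $\epsilon^{3/2}$ by the smaller quantity $\epsilon^2$, thereby homogenizing both branches to the same power of $\epsilon$; this is precisely where the restriction $\epsilon \le 1$ enters. After factoring out the common $\theta$, the two lower bounds read $\theta\cdot(\theta\beta/\Hmax)\,\epsilon^2$ and $\theta\cdot\sqrt{3\theta\beta/\sigmamax}\,\epsilon^2$, so their minimum reproduces exactly the bracketed term in $c_1$; multiplying through by $\eta_1(1-\beta)$ then recovers the stated constant. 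One minor bookkeeping point is the degenerate case $\norm{\nabla^2_{\I_k} f(x_k)} = 0$, in which the first argument of $\hat\alpha_k$ equals $+\infty$; since $+\infty$ still dominates its claimed finite lower bound, the ``minimum of lower bounds'' argument goes through verbatim and no separate treatment is needed.
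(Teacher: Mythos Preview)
Your proposal is correct and follows essentially the same route as the paper: discard the cubic term from Proposition~\ref{prop:f_decr}, expand $\hat\alpha_k$, apply the bounds $\norm{\nabla_{\I_k} f(x_k)} \ge \theta\epsilon$, $\norm{\nabla^2_{\I_k} f(x_k)} \le \Hmax$, $\sigma_k \le \sigmamax$, and use $\epsilon \le 1$ to homogenize the square-root branch. The only cosmetic difference is that the paper first peels off one factor via $\norm{\nabla_{\I_k} f(x_k)}^2 \ge \theta\epsilon\,\norm{\nabla_{\I_k} f(x_k)}$ and then lower bounds $\hat\alpha_k\,\norm{\nabla_{\I_k} f(x_k)}$, whereas you bound $\hat\alpha_k\,\norm{\nabla_{\I_k} f(x_k)}^2$ in one shot; the ingredients and the final constant are identical.
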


\begin{proof}
Take any iteration $k \in \S$ such that $\norm{ \nabla f(x_k)} \ge \epsilon$, with $\epsilon \in [0,1]$. From the instructions of the algorithm and the definition of $\S$ given in~\eqref{S_iter}, we have that $x_{k+1} = x_k + U_{\I_k} s_k$.
Using Proposition~\ref{prop:f_decr} and the greedy selection rule~\eqref{ggr}, we have that
\[
f(x_k) - f(x_{k+1}) \ge \eta_1 (1 - \beta) \hat \alpha_k \norm{\nabla_{\I_k} f(x_k)}^2
\ge \theta \eta_1 (1 - \beta) \hat \alpha_k \norm{\nabla f(x_k)} \norm{\nabla_{\I_k} f(x_k)}.
\]
Since $\norm{ \nabla f(x_k)} \ge \epsilon$, we obtain
\begin{equation}\label{f_decr_compl_GR_2}
f(x_k) - f(x_{k+1}) \ge \theta \eta_1 (1 - \beta) \hat \alpha_k \epsilon \norm{\nabla_{\I_k} f(x_k)}.
\end{equation}
Now, using the definition of $\hat \alpha_k$ given in~\eqref{sk_m_block}, we can write
\[
\hat \alpha_k \norm{\nabla_{\I_k} f(x_k)} =
\min\biggl\{\frac{\beta \norm{\nabla_{\I_k} f(x_k)}}{\norm{\nabla^2_{\I_k} f(x_k)}}, \sqrt{\dfrac{3 \beta \norm{\nabla_{\I_k} f(x_k)}}{\sigma_k}}\biggr\}.
\]
Since, from~\eqref{H_bounded_k} and Proposition~\ref{prop:sigma_bounds}, respectively, $\norm{\nabla^2_{\I_k} f(x_k)} \le \Hmax$ and $\sigma_k \le \sigmamax$, we get
\[
\hat \alpha_k \norm{\nabla_{\I_k} f(x_k)} \ge \min\biggl\{\frac{\beta \norm{\nabla_{\I_k} f(x_k)}}{\Hmax}, \sqrt{\dfrac{3 \beta \norm{\nabla_{\I_k} f(x_k)}}{\sigmamax}}\biggr\}.
\]
So, using the greedy selection rule~\eqref{ggr} and the fact that $\norm{ \nabla f(x_k)} \ge \epsilon$, with $\epsilon \in [0,1]$, we obtain
\begin{equation}\label{alpha_hat_lb}
\hat \alpha_k \norm{\nabla_{\I_k} f(x_k)} \ge \min\biggl\{\frac{\theta \beta \epsilon}\Hmax, \sqrt{\dfrac{3 \theta \beta \epsilon}{\sigmamax}}\biggr\}
\ge \epsilon \min\biggl\{\frac{\theta \beta}\Hmax, \sqrt{\dfrac{3 \theta \beta}{\sigmamax}}\biggr\}.
\end{equation}
Then, combining~\eqref{f_decr_compl_GR_2} and~\eqref{alpha_hat_lb}, the desired result follows.
\end{proof}

Using the above result, we can easily show the global convergence of \IBCN\ to stationary points.

\begin{theorem}\label{th:conv}
If Assumption~\ref{assump:bound} holds, then
\[
\lim_{k \to \infty} \nabla f(x_k) = 0.
\]
\end{theorem}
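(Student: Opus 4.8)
The plan is to combine the summable objective decrease supplied by Proposition~\ref{prop:f_decr_eps} with the observation that the iterate, and hence the gradient, can change only at successful iterations. First I would establish that the total objective decrease is finite. By Remark~\ref{rem:f_decr} the sequence $\{f(x_k)\}$ is monotonically non-increasing, while \eqref{fmin_k} gives $f(x_k) \ge \fmin$ for all $k$. Hence $\{f(x_k)\}$ converges, and since the summands vanish at unsuccessful iterations (where $x_{k+1}=x_k$ by \eqref{x_upd}),
\[
\sum_{k \in \S} \bigl(f(x_k) - f(x_{k+1})\bigr) = \sum_{k \ge 0} \bigl(f(x_k) - f(x_{k+1})\bigr) = f(x_0) - \lim_{k\to\infty} f(x_k) \le f(x_0) - \fmin < \infty .
\]

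Second, I would argue that $\S$ is infinite (if instead the while loop terminates at a point with zero gradient, the thesis is trivial). Indeed, were all iterations beyond some index unsuccessful, then $\rho_k < \eta_1$ there and, by the updating rule~\eqref{sigma_upd}, $\sigma_{k+1} \ge \gamma_2 \sigma_k$ with $\gamma_2 > 1$, forcing $\sigma_k \to \infty$ and contradicting the uniform bound $\sigma_k \le \sigmamax$ of Proposition~\ref{prop:sigma_bounds}.

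Third, which is the crux, I would fix an arbitrary $\epsilon \in (0,1]$ and show that only finitely many iterations satisfy $\norm{\nabla f(x_k)} \ge \epsilon$. Proposition~\ref{prop:f_decr_eps} guarantees that every \emph{successful} such iteration contributes at least $c_1 \epsilon^2 > 0$ to the decrease, so the summability above yields that the set $K_{\epsilon} := \{k \in \S : \norm{\nabla f(x_k)} \ge \epsilon\}$ obeys $\abs{K_{\epsilon}} \le (f(x_0) - \fmin)/(c_1 \epsilon^2) < \infty$. To pass from successful iterations to all iterations, I would exploit that $\{x_k\}$ is constant on maximal blocks of consecutive indices, that each such block terminates at a successful iteration (since $x_{k+1}\neq x_k$ only when $k\in\S$ by~\eqref{x_upd} and~\eqref{S_iter}), and that each block is finite because $\S$ is infinite. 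On a block $\nabla f(x_k)$ equals its value at the block's terminal, successful index, so $\norm{\nabla f(x_k)} \ge \epsilon$ on a block forces that terminal index into $K_{\epsilon}$. Hence $\{k : \norm{\nabla f(x_k)} \ge \epsilon\}$ is a finite union of finite blocks, and is therefore finite. Consequently $\norm{\nabla f(x_k)} < \epsilon$ for all sufficiently large $k$, and since $\epsilon \in (0,1]$ is arbitrary this is exactly $\lim_{k\to\infty}\nabla f(x_k) = 0$.

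The step I expect to be the main obstacle is the bookkeeping in the third paragraph: Proposition~\ref{prop:f_decr_eps} controls only successful iterations, whereas the gradient is left untouched at unsuccessful ones. One must therefore carefully use the piecewise-constant structure of $\{x_k\}$ together with the infinitude of $\S$ to rule out a large-gradient plateau persisting indefinitely, rather than naively trying to bound the decrease at unsuccessful iterations.
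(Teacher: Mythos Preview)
Your proof is correct and follows essentially the same approach as the paper: reduce the claim to the behavior of $\nabla f(x_k)$ along successful iterations (using that $x_{k+1}=x_k$ when $k\notin\S$) and then invoke Proposition~\ref{prop:f_decr_eps} together with the finite total decrease $f(x_0)-\fmin$ to rule out an infinite subset of $\S$ with $\norm{\nabla f(x_k)}\ge\epsilon$. You are more explicit than the paper in verifying that $\S$ is infinite (via Proposition~\ref{prop:sigma_bounds}) and in spelling out the block structure that transfers the conclusion from $\S$ to all $k$, which is a welcome bit of care the paper leaves implicit.
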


\begin{proof}
Since, in view of the definition of $\S$ given in~\eqref{S_iter}, we have $\nabla f(x_{k+1}) = \nabla f(x_k)$ for all $k \notin \S$, then the result is true if and only if
\[
\lim_{\substack{k \to \infty \\ k \in \S}} \nabla f(x_k) = 0.
\]
By contradiction, assume that $\epsilon \in (0,1]$ and an infinite subset $K \subseteq \S$ exist such that $\norm{\nabla f(x_k)} \ge \epsilon$ for all $k \in K$.
From Proposition~\ref{prop:f_decr_eps}, it follows that $f$ is unbounded from below,
thus contradicting~\eqref{fmin_k}.
\end{proof}

\begin{remark}
From Theorem~\ref{th:conv} and the continuity of $\nabla f$, it follows that every limit point of $\{x_k\}$ is a stationary point.
\end{remark}

\subsection{Worst-case iteration complexity}\label{subsec:complexity}
Here, we analyze the worst-case iteration complexity of the proposed \IBCN\ method, providing two main results.

First, in the following theorem, we show that at most $\bigO{\epsilon^{-3/2}}$ iterations are needed
to drive $\norm{\nabla_{\I_k}f(x_{k+1})}$ below a given threshold $\epsilon > 0$.
Note that, in the proof of the following theorem, no role is played by the greedy selection rule~\eqref{ggr}, that is,
the result holds for any arbitrary choice of the blocks.

\begin{theorem}\label{th:compl_ibcn}
Given $\epsilon > 0$, let
\[
\Kepsblock = \{k \ge 0 \colon \norm{\nabla_{\I_k}f(x_{k+1})} \ge \epsilon\}.
\]
If Assumption~\ref{assump:bound} holds, then
\[
\abs{\Kepsblock \cap \S} \le \biggl(\frac{f(x_0)-\fmin}{c_2}\biggr) \epsilon^{-3/2},
\]
where
\[
c_2 = \frac{\eta_1 \sigmamin}6 \biggl(\tau + \frac{\sigmamax+L}2\biggr)^{-3/2}.
\]
Moreover, let $\bar k$ be the first iteration such that $\bar k \notin \Kepsblock$. Then,
\[
\bar k \le \biggl(1 + \frac{\abs{\log \gamma_1}}{\log \gamma_2}\biggr) \biggl(\frac{f(x_0)-\fmin}{c_2}\biggr) \epsilon^{-3/2} + \frac 1{\log \gamma_2} \log\biggl(\frac{\sigmamax}{\sigma_0}\biggr),
\]
where $\sigmamax$ is given in Proposition~\ref{prop:sigma_bounds}.
\end{theorem}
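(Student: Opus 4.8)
The plan is to establish the two displayed inequalities separately: the first controls the number of \emph{successful} iterations at which the block stationarity violation stays above $\epsilon$, and the second converts this count into a bound on the total iteration index $\bar k$ via the successful-to-total accounting of Proposition~\ref{prop:U_ub}. Throughout, I would exploit that the greedy rule~\eqref{ggr} plays no role here: the quantity being driven down is the block residual $\norm{\nabla_{\I_k} f(x_{k+1})}$ itself, not $\norm{\nabla f(x_k)}$, so the argument goes through for any choice of the blocks.

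For the first bound, fix any $k \in \Kepsblock \cap \S$. The key is to turn the defining inequality $\norm{\nabla_{\I_k} f(x_{k+1})} \ge \epsilon$ into a lower bound on $\norm{s_k}$. Proposition~\ref{prop:g_ub} gives $\norm{\nabla_{\I_k} f(x_{k+1})} \le (\tau + (\sigma_k + L_{\I_k})/2)\norm{s_k}^2$, and using the uniform estimates $\sigma_k \le \sigmamax$ from Proposition~\ref{prop:sigma_bounds} and $L_{\I_k} \le L$ from~\eqref{L_bounds} I would obtain $\norm{s_k}^2 \ge \epsilon\,(\tau + (\sigmamax+L)/2)^{-1}$, hence $\norm{s_k}^3 \ge \epsilon^{3/2}(\tau + (\sigmamax+L)/2)^{-3/2}$. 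Next I would feed this into Proposition~\ref{prop:f_decr}: discarding the non-negative first term and keeping only $\tfrac{\sigma_k}{6}\norm{s_k}^3$, together with $\sigma_k \ge \sigmamin$, yields a per-iteration decrease bounded below by a fixed constant times $\epsilon^{3/2}$, namely $f(x_k) - f(x_{k+1}) \ge c_2\,\epsilon^{3/2}$ (up to the factor $\eta_1$ carried from Proposition~\ref{prop:f_decr}). Finally, since $\{f(x_k)\}$ is monotonically non-increasing by Remark~\ref{rem:f_decr} and bounded below by $\fmin$ through~\eqref{fmin_k}, summing these decreases over all $k \in \Kepsblock \cap \S$ telescopes into at most $f(x_0) - \fmin$, and rearranging gives the claimed $\bigO{\epsilon^{-3/2}}$ count.

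For the bound on $\bar k$, I would use that $\bar k$ is the \emph{first} index outside $\Kepsblock$, so every iteration in $\{0,\ldots,\bar k - 1\}$ lies in $\Kepsblock$; in particular the successful ones among them satisfy $|S_{\bar k-1}| \le |\Kepsblock \cap \S|$. Applying Proposition~\ref{prop:U_ub} at $k = \bar k - 1$ then expresses $\bar k$ in terms of $|S_{\bar k-1}|$, and substituting the first bound together with the logarithmic $\sigma$-accounting term delivers the stated estimate for $\bar k$.

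The main obstacle, and the point on which the proof really hinges, is the exponent bookkeeping in the first step: Proposition~\ref{prop:g_ub} controls the residual by $\norm{s_k}^2$, whereas the guaranteed objective decrease scales with $\norm{s_k}^3$, and it is precisely the relation $\norm{s_k}^3 = (\norm{s_k}^2)^{3/2}$ that produces the $3/2$ power and hence the $\epsilon^{-3/2}$ rate. Making the resulting constant $c_2$ uniform in $k$ is what forces the use of the global bounds $\sigmamin \le \sigma_k \le \sigmamax$ and $L_{\I_k} \le L$; once these are in place, the remaining steps (the telescoping and the successful-to-total conversion) are routine.
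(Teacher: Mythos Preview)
Your proposal is correct and follows essentially the same route as the paper: invert Proposition~\ref{prop:g_ub} (with the uniform bounds $\sigma_k\le\sigmamax$, $L_{\I_k}\le L$) to get $\norm{s_k}^3\ge\epsilon^{3/2}(\tau+(\sigmamax+L)/2)^{-3/2}$, feed this into the $\tfrac{\sigma_k}{6}\norm{s_k}^3$ term of Proposition~\ref{prop:f_decr} with $\sigma_k\ge\sigmamin$, telescope using~\eqref{fmin_k}, and then apply Proposition~\ref{prop:U_ub} at $k=\bar k-1$. Your remark that the greedy rule plays no role here is exactly what the paper also observes, and your parenthetical about the $\eta_1$ factor is well taken: the paper silently drops it when passing from Proposition~\ref{prop:f_decr} to its displayed inequality $f(x_k)-f(x_{k+1})\ge\tfrac{\sigmamin}{6}\norm{s_k}^3$, so the stated $c_2$ is really missing a harmless multiplicative $\eta_1$.
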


\begin{proof}
From Proposition~\ref{prop:f_decr} and the definition of $\S$ given in~\eqref{S_iter}, we have that $f(x_k) - f(x_{k+1}) \ge 0$ for all $k \ge 0$.
So, using the lower bound on $f(x)$ given in~\eqref{fmin_k}, we can write
\[
f(x_0) - \fmin \ge f(x_0)-f(x_{r+1}) \ge \sum_{\substack{h=0 \\ h \in \Kepsblock \cap \S}}^r (f(x_h) - f(x_{h+1})) \quad \forall r \ge 0.
\]
Taking the limit for $r \to \infty$, we get
\begin{equation}\label{f_decr_total_compl}
f(x_0) - \fmin \ge \sum_{k \in \Kepsblock \cap \S} (f(x_k) - f(x_{k+1})).
\end{equation}
Now we want to lower bound the right-hand side term of~\eqref{f_decr_total_compl}.
First, from Proposition~\ref{prop:f_decr}, we have that
\begin{equation}\label{f_decr_compl_1}
f(x_k) - f(x_{k+1}) \ge \frac{\eta_1 \sigma_k}6 \norm{s_k}^3 \ge \frac{\eta_1 \sigmamin}6 \norm{s_k}^3 \quad \forall k \in \S,
\end{equation}
where, in the last inequality, we have used Proposition~\ref{prop:sigma_bounds} to lower bound $\sigma_k$.
Moreover, from Proposition~\ref{prop:g_ub}, we have that
\begin{equation}\label{g_ub_compl}
\norm{\nabla_{\I_k} f(x_{k+1})} \le \biggl(\tau + \frac{\sigma_k+L_{\I_k}}2\biggr) \norm{s_k}^2 \le \biggl(\tau + \frac{\sigmamax+L}2\biggr) \norm{s_k}^2 \quad \forall k \in \S,
\end{equation}
where, in the last inequality, we have used Proposition~\ref{prop:sigma_bounds} and~\eqref{L_bounds}
to upper bound $\sigma_k$ and $L_{\I_k}$, respectively.
Therefore, from~\eqref{f_decr_compl_1} and~\eqref{g_ub_compl}, we obtain
\begin{equation}\label{f_decr_compl_2}
f(x_k) - f(x_{k+1}) \ge c_2 \norm{\nabla_{\I_k} f(x_{k+1})}^{3/2} \quad \forall k \in \S.
\end{equation}
It follows that
\[
f(x_k) - f(x_{k+1}) \ge c_2 \epsilon^{3/2} \quad \forall k \in \S \text{ such that } \norm{\nabla_{\I_k} f(x_{k+1})} \ge \epsilon.
\]
Using this inequality in~\eqref{f_decr_total_compl}, we obtain
\[
f(x_0) - \fmin \ge \abs{\Kepsblock \cap \S} c_2 \epsilon^{3/2},
\]
thus leading to the desired upper bound on $\abs{\Kepsblock \cap \S}$.
To obtain the desired upper bound on $\bar k$, it is trivial if $\bar k = 0$,
otherwise the result follows by using Proposition~\ref{prop:U_ub} with $k = \bar k - 1$ and noting that $\abs{\S_{\bar k - 1}} \le \abs{\Kepsblock \cap \S}$.
\end{proof}

From Theorem~\ref{th:compl_ibcn} we see that, to drive the stationarity violation with respect to \textit{a selected block of variables} below $\epsilon$,
we need at most $\bigO{\epsilon^{-3/2}}$ iterations, thus matching the complexity bound given in~\cite{amaral:2022} for cyclic-type selection.

Moreover, when $\I_k = \{1,\ldots,n\}$ for all $k$, we retain the complexity bound of standard cubic Newton methods, that is,
at most $\bigO{\epsilon^{-3/2}}$ iterations are needed to obtain $\norm{\nabla f(x_k)} < \epsilon$.

In a general case where $\abs{\I_k} < n$, Theorem~\ref{th:compl_ibcn} does not provide information on how many iterations
are needed in the worst case to drive the stationarity violation with respect to \textit{all variables} below $\epsilon$.
Such a complexity bound is given in the next theorem, ensuring that at most $\bigO{\epsilon^{-2}}$ iterations are needed to get $\norm{\nabla f(x_k)} < \epsilon$.

\begin{theorem}\label{th:compl_ibcn_ggr}
Given $\epsilon \in (0,1]$, let
\[
\Keps = \{k \ge 0 \colon \norm{\nabla f(x_k)} \ge \epsilon\}.
\]
If Assumption~\ref{assump:bound} holds, then
\[
\abs{\Keps \cap \S} \le \biggl(\frac{f(x_0)-\fmin}{c_1}\biggr) \epsilon^{-2},
\]
where $c_1$ is defined as in Proposition~\ref{prop:f_decr_eps}.
Moreover, let $\hat k$ be the first iteration such that $\hat k \notin \Keps$. Then,
\[
\hat k \le \biggl(1 + \frac{\abs{\log \gamma_1}}{\log \gamma_2}\biggr) \biggl(\frac{f(x_0)-\fmin}{c_1}\biggr) \epsilon^{-2} + \frac 1{\log \gamma_2} \log\biggl(\frac{\sigmamax}{\sigma_0}\biggr),
\]
where $\sigmamax$ is given in Proposition~\ref{prop:sigma_bounds}.
\end{theorem}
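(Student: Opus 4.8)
The plan is to mirror the argument of Theorem~\ref{th:compl_ibcn}, simply replacing the chain that passed through Proposition~\ref{prop:g_ub} by the single inequality of Proposition~\ref{prop:f_decr_eps}. This is exactly the step where the greedy selection rule~\eqref{ggr} enters and where the improved $\epsilon^{-2}$ rate comes from, since Proposition~\ref{prop:f_decr_eps} already relates the per-iteration decrease directly to the \emph{full} gradient norm $\norm{\nabla f(x_k)}$.

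First I would set up the telescoping inequality. By the definition of $\S$ in~\eqref{S_iter} we have $x_{k+1} = x_k$, and hence $f(x_{k+1}) = f(x_k)$, for every $k \notin \S$; moreover $\{f(x_k)\}$ is monotonically non-increasing by Remark~\ref{rem:f_decr}. Summing $f(x_h) - f(x_{h+1})$ over $h = 0,\ldots,r$, discarding the nonnegative terms indexed by $h \notin \Keps \cap \S$, using $f(x_{r+1}) \ge \fmin$ from~\eqref{fmin_k}, and letting $r \to \infty$ yields
\[
f(x_0) - \fmin \ge \sum_{k \in \Keps \cap \S} (f(x_k) - f(x_{k+1})).
\]

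Next I would lower bound each summand. For every $k \in \Keps \cap \S$ we simultaneously have $k \in \S$ and $\norm{\nabla f(x_k)} \ge \epsilon$ with $\epsilon \in (0,1]$, so the hypotheses of Proposition~\ref{prop:f_decr_eps} are met and $f(x_k) - f(x_{k+1}) \ge c_1 \epsilon^2$. Substituting this into the telescoping inequality gives $f(x_0) - \fmin \ge \abs{\Keps \cap \S}\, c_1 \epsilon^2$, which rearranges to the claimed count bound $\abs{\Keps \cap \S} \le \bigl((f(x_0) - \fmin)/c_1\bigr)\epsilon^{-2}$. For the bound on $\hat k$ I would then convert the count of successful iterations into a total iteration count via Proposition~\ref{prop:U_ub}, exactly as in Theorem~\ref{th:compl_ibcn}: since every iteration preceding $\hat k$ lies in $\Keps$, the successful iterations among $\{0,\ldots,\hat k - 1\}$ all belong to $\Keps \cap \S$, so their number is at most $\abs{\Keps \cap \S}$, and feeding this into Proposition~\ref{prop:U_ub} (applied at index $\hat k - 1$) produces the stated bound.

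I do not expect a genuine obstacle here: the substantive work of relating the decrease at a successful iteration to $\norm{\nabla f(x_k)}$ through the greedy rule has already been carried out in Proposition~\ref{prop:f_decr_eps}. The only point that must be handled carefully is the bookkeeping, namely ensuring that the telescoping sum retains only the successful iterations (so that Proposition~\ref{prop:f_decr} applies through Proposition~\ref{prop:f_decr_eps}) and that the decrease estimate $c_1\epsilon^2$ is invoked solely at indices lying in both $\S$ and $\Keps$.
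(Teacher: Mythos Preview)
Your proposal is correct and follows essentially the same route as the paper's own proof: set up the telescoping sum over successful iterations, bound it below via $f(x_0)-\fmin$, apply Proposition~\ref{prop:f_decr_eps} to each term in $\Keps \cap \S$, and then convert the successful-iteration count into a total-iteration count via Proposition~\ref{prop:U_ub}. Your bookkeeping is, if anything, slightly more explicit than the paper's (you spell out why the discarded terms are nonnegative and why all predecessors of $\hat k$ lie in $\Keps$), but the underlying argument is identical.
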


\begin{proof}
From Proposition~\ref{prop:f_decr} and the definition of $\S$ given in~\eqref{S_iter}, we have that $f(x_k) - f(x_{k+1}) \ge 0$ for all $k \ge 0$.
So, using the lower bound on $f(x)$ given in~\eqref{fmin_k}, we can write
\[
f(x_0) - \fmin \ge f(x_0)-f(x_{r+1}) \ge \sum_{\substack{h=0 \\ h \in \Keps \cap \S}}^r (f(x_h) - f(x_{h+1})) \quad \forall r \ge 0.
\]
Taking the limit for $r \to \infty$, we get
\[
f(x_0) - \fmin \ge \sum_{k \in \Keps \cap \S} (f(x_k) - f(x_{k+1})).
\]
From Proposition~\ref{prop:f_decr_eps}, it follows that
\[
f(x_0) - \fmin \ge \abs{\Keps \cap \S} c_1 \epsilon^2,
\]
thus leading to the desired upper bound on $\abs{\Keps \cap \S}$.
To obtain the desired upper bound on $\hat k$, it is trivial if $\hat k = 0$,
otherwise the result follows by using Proposition~\ref{prop:U_ub} with $k = \hat k - 1$ and noting that $\abs{\S_{\hat k - 1}} \le \abs{\Keps \cap \S}$.
\end{proof}

\begin{remark}\label{rem:theta}
Since $c_1 = \bigO{\theta^{3/2}}$, with $\theta \in (0,1]$, it follows that the larger $\theta$ the better the complexity bound of Theorem~\ref{th:compl_ibcn_ggr}.
Values for $\theta$ have been derived in Propositions~\ref{prop:ws1}--\ref{prop:ws2} when using two simple strategies for the block selection.
\end{remark}

\section{Numerical experiments}\label{sec:num}
In this section, we report some numerical results.
The experiments were run in Matlab R2024a on an Apple MacBook Pro with an Apple M1 Pro Chip and 16 GB RAM.

Given a set of samples $\{a^1,\ldots,a^m\} \subseteq \R^n$ and labels $\{b^1,\ldots,b^m\} \subseteq \R$,
let $\varphi_x \colon \Rn \to \R$ be a prediction function parameterized by a vector $x$.
We consider optimization problems from regression and classification models
where the objective function has the following form: 
\begin{equation}\label{ml_problem}
f(x) = \frac 1m \sum_{i=1}^m \ell(b^i,\varphi_{x}(a^i)) + \lambda P(x),
\end{equation}
with $\ell \colon \R \times \R \to [0,\infty)$ being a loss function, $P \colon \Rn \to [0,\infty)$ being a regularizer
and $\lambda \ge 0$ being a regularization parameter.

In what follows, we consider two types of comparison:
\begin{itemize}
\item In Subsection~\ref{subsec:num_upd}, we compare \IBCN\ with other schemes that use the same greedy block selection but different block updates;
\item In Subsection~\ref{subsec:num_sel}, we compare \IBCN\ with other schemes that use the same cubic Newton updates but different block selection.
\end{itemize}

\subsection{Comparison with other block updates}\label{subsec:num_upd}
We first compare \IBCN\ with two block coordinate descent methods using
the same greedy selection rule.
Specifically, we consider both a first-order method and a second-order method,
referred to as \BCDFIRST\ and \BCDSECOND, respectively.
At each iteration $k$ of \BCDFIRST\ and \BCDSECOND, given the current point $x_k$ and a block of variables $\I_k$,
we compute a search direction $d_k \in \RIk$ as follows:
\begin{itemize}
\item For \BCDFIRST, we use the steepest descent direction, that is,
    \[
    d_k = -\nabla_{\I_k} f(x_k);
    \]
\item For \BCDSECOND, we use a diagonally scaled steepest descent direction~\cite{bertsekas:1999,tseng:2009}, that is,
    \[
    d_k = -(H_k )^{-1} \nabla_{\I_k} f(x_k),
    \]
    where $H_k \in \R^{\abs{\I_k} \times \abs{\I_k}}$ is symmetric and positive definite.
    To compute $H_k$, we choose a diagonal Hessian approximation as in~\cite[Subsection 7.2]{tseng:2009}, that is,
    \[
    H_k = \text{diag}(v_k), \quad \text{with} \quad v_k = \bigl[ \min\{\max\{\nabla_{\{j\}}^2 f(x_k),10^{-2}\},10^9\}\bigr]_{j \in \I_k},
    \]
    where $\text{diag}(v_k)$ denotes the diagonal matrix constructed from the vector $v_k$.
\end{itemize}
For both \BCDFIRST\ and \BCDSECOND, once $d_k$ is obtained, we set $x_{k+1} = x_k + \alpha_k U_{\I_k} d_k$, with $\alpha_k$ being computed by means of an Armijo line search, similarly as in~\cite{bertsekas:1999,tseng:2009}.

At each iteration $k$ of \IBCN, \BCDFIRST\ and \BCDSECOND, a block of variables $\I_k$ is chosen
as described in Proposition~\ref{prop:ws1}, that is, such that $\norm{\nabla_{\I_k} f(x_k)}_{\infty} = \norm{\nabla f(x_k)}_{\infty}$.
In more detail, first we compute the index $\hat \imath_k$ corresponding to the largest component in absolute value of $\nabla f(x_k)$,
then $\I_k$ is set to include $\hat \imath_k$ with the other variable indices being chosen randomly.
In our experiments, we use blocks of size $q \in \{1,5,10,20,50,100\}$.

In \IBCN, we set $\sigma_0 = \sigmamin = 1$, $\eta_1 = \eta_2 = 0.1$, $\gamma_1 = 1$, $\gamma_2 = \gamma_3 = 2$ and $\tau = 1$.
To compute $s_k$ at each iteration $k$ of \IBCN,
we set $s_k = \hat s_k$, with $\hat s_k$ defined as in~\eqref{sk_m_block}, if this choice satisfies~\eqref{sk_gm_block}.
Otherwise, we run a Barzilai-Borwein gradient method~\cite{raydan:1997} to $m_k(s)$, starting from $\hat s_k$, until a point $s$ is produced
such that~\eqref{sk_gm_block} holds with $s_k$ replaced by $s$.

In all experiments,
we run \IBCN, \BCDFIRST\ and \BCDSECOND\ from the starting point $x_0 = 0$ for $10^4$ iterations without using any other stopping condition.
Then, considering a sequence $\{x_k\}$ produced by a given algorithm, we analyze the decrease of the objective error
$(f(x_k)-f^*)$ and the decrease of the first-order stationarity violation $\norm{\nabla f(x_k)}$,
with $f^*$ denoting the best objective value found for a given problem over all simulations.

Both non-convex and convex problems are considered in our experiments in Subsubsection~\ref{subsub:sp_ls} and \ref{subsec:l2_log_reg}, respectively.

\subsubsection{Sparse least squares}\label{subsub:sp_ls}
The problem of recovering sparse vectors from linear measurements is central in many applications,
such as compressive sensing~\cite{donoho:2006} and variable selection~\cite{fan:2001}.
To obtain sparse solutions, a popular approach is to use least squares with
$\ell_1$-norm regularization, resulting in a convex formulation known as LASSO~\cite{tibshirani:1996}.
However, in order to overcome the bias connected to the $\ell_1$ norm, some non-convex regularizers have also been introduced in the literature~\cite{wen:2018}.

Here we use a non-convex sparsity promoting term considered in, e.g., \cite{lai:2013,pant:2014},
given by $P(x) = \sum_{i=1}^n (x_i^2 + \omega^2)^{p/2}$, with small $\omega>0$ and $p \in (0,1)$.
Using the least squares as loss, we hence obtain the following non-convex problem:
\begin{equation}\label{sp_ls_problem}
\min_{x \in \Rn} \frac 1m \norm{Ax - b}^2 + \lambda \sum_{i=1}^n (x_i^2 + \omega^2)^{p/2},
\end{equation}
where $A = \begin{bmatrix} a^1 & \ldots & a^m \end{bmatrix}^T \in \R^{m \times n}$ and $b = \begin{bmatrix} b^1 & \ldots & b^m \end{bmatrix}^T \in \R^m$.
In our experiments we set $\lambda = 10^{-2}$, $\omega = 10^{-2}$ and $p = 0.5$.
After generating the elements of the matrix $A$ randomly from a uniform distribution in $(0,1)$, with $m = n = 10,000$,
a vector $\hat x \in \Rn$ was created with all components equal to zero except for $1\%$ of them, which were randomly set to $\pm 1$.
Then, we set $b = A \hat x + \zeta$, where $\zeta \in \R^m$ is a noise vector with elements drawn from a normal distribution
with mean $0$ and standard deviation $10^{-3}$.

Note that, according to the notation used in~\eqref{ml_problem}--\eqref{sp_ls_problem}, we have
\[
\nabla f(x) = \frac 2m A^T (Ax - b) + \lambda \nabla P(x), \quad \text{and} \quad
\nabla^2 f(x) = \frac 2m A^T A + \lambda \nabla^2 P(x).
\]
In the above formulas, the most computationally expensive operations are those involving the quadratic term, as the regularizer $P(x)$ has a separable structure leading to a negligible cost.
In particular, when the problem dimension is large, the computation of $A^T A$ can be prohibitive, as it requires $\bigO{mn^2}$ arithmetic operations, and should therefore be avoided.
Considering such a scenario, here we do not store $A^T A$ in the considered algorithms. Consequently, at every iteration, to obtain the Hessian of $f$ with respect to a block of size $q$, we compute inner products between selected columns of $A$, with a cost of $\bigO{mq^2}$ arithmetic operations, except when using blocks of dimension $q=1$, in which case we store the diagonal of $A^T A$ since this only requires an affordable $\bigO{mn}$ cost in the initialization.
Then, to get first-order derivatives at iteration $k$, we use the residual vector $r_k := Ax_k - b$, so that
\begin{equation}\label{gk_residual}
\nabla f(x_k) = \frac 2m A^T r_k + \lambda \nabla P(x_k).
\end{equation}
In this way, the computation of $\nabla f(x_k)$ requires $\bigO{mn}$ arithmetic operations, 
while the cost to update $r_{k+1}$ from $r_k$ is $\bigO{mq}$, since 
$r_{k+1} = r_k + A U_{\I_k} s_k$ with $U_{\I_k} s_k$ having at most $q$ non-zeros.
This strategy is commonly adopted when dealing with	quadratic functions in a block-coordinate setting (see, e.g., \cite{cristofari:2019a,hsieh:2008,necoara:2014}).

Overall, we see that the cost to compute the required first- and second-order derivatives
at every iteration is $\bigO{m\max\{q^2,n\}}$ arithmetic operations, which in our case reduces to $\bigO{mn}$ since $n \ge q^2$.

We run $10$ simulations and in Figure~\ref{fig:ls_sp} we report the average results with respect to both the number of iterations and the CPU time.

We see that, for $q = 1$, all the considered methods perform very similarly,
whereas \IBCN\ clearly outperforms both \BCDFIRST\ and \BCDSECOND\ as the size of the blocks increases.
In particular, while the results of \IBCN\ and \BCDSECOND\ remain comparable for $q = 5$,
we see that, for $q \ge 10$, \IBCN\ yields a much faster decrease in both the objective function and the gradient norm than the competing methods.
Within the given limit of $10^4$ iterations, we also observe that \IBCN\ is consistently able to achieve a lower objective value with a smaller gradient norm.

\begin{figure}
\centering
\subfloat[Objective error vs iteration]
{\includegraphics[scale=0.8, trim = 0.4cm 2cm 0.65cm 0.3cm, clip]{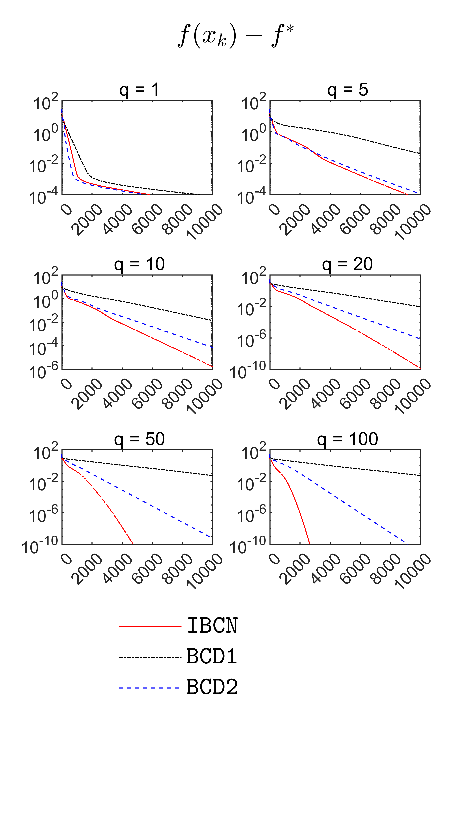}} \qquad \qquad
\subfloat[Stationarity violation vs iteration]
{\includegraphics[scale=0.8, trim = 0.4cm 2cm 0.65cm 0.3cm, clip]{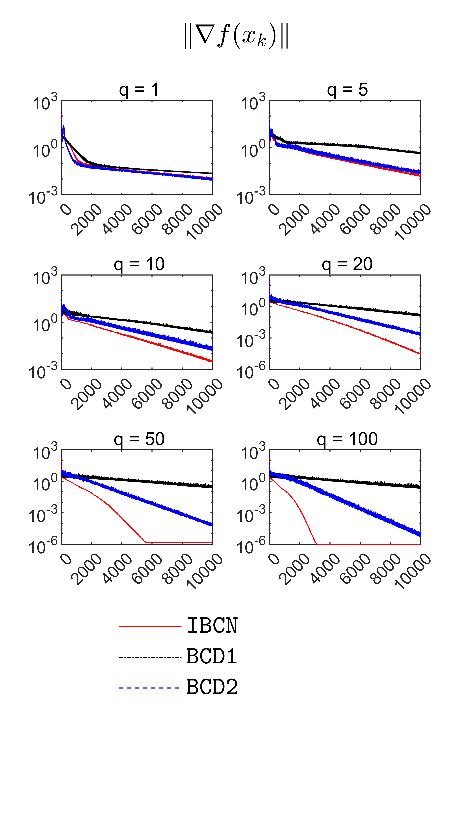}} \\
\subfloat[Objective error vs CPU time]
{\includegraphics[scale=0.8, trim = 0.4cm 2cm 0.55cm 0.3cm, clip]{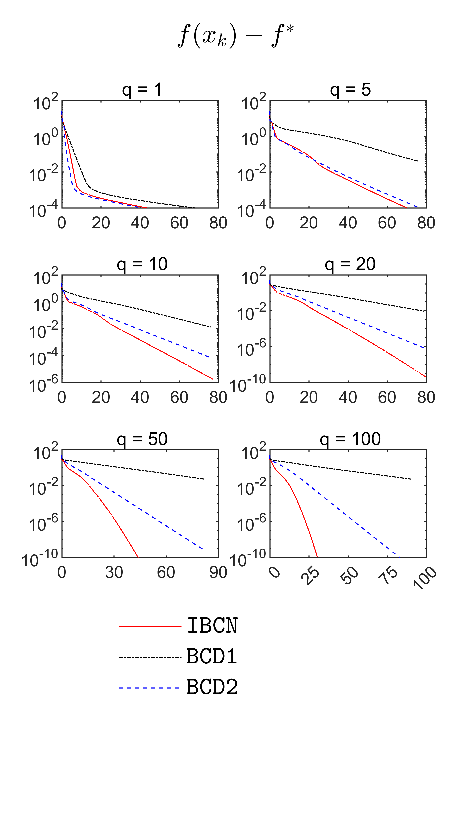}} \qquad \qquad
\subfloat[Stationarity violation vs CPU time]
{\includegraphics[scale=0.8, trim = 0.4cm 2cm 0.55cm 0.3cm, clip]{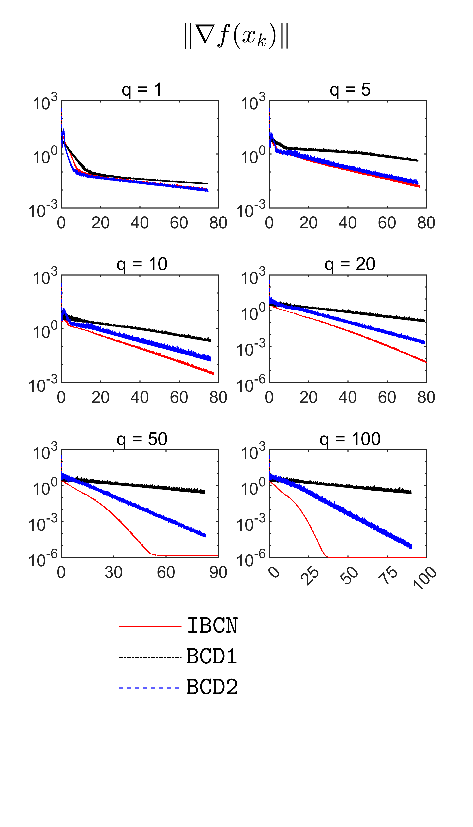}}
\caption{Results on sparse least squares using blocks of size $q$. In each plot, the $y$ axis is in logarithmic scale.}
\label{fig:ls_sp}
\end{figure}

\subsubsection{Regularized logistic regression}\label{subsec:l2_log_reg}
To assess how \IBCN\ works on convex problems, we consider the $\ell_2$-regularized logistic regression.
In particular, assuming that $b^i \in \{\pm 1\}$, $i = 1,\ldots,m$, the optimization problem can be formulated as follows:
\[
\min_{(x,z) \in \R^{n+1}} \frac 1m \sum_{i=1}^m \log \Bigl(1 + e^{-b^i((a^i)^T x + z)}\Bigr) + \lambda \norm x^2,
\]

We use the following three datasets from \url{https://www.csie.ntu.edu.tw/~cjlin/libsvmtools/datasets/}:
\begin{enumerate}
\item[(i)] gisette (train), $m = 6000$, $n = 5000$;
\item[(ii)] leu (train), $m = 38$, $n = 7129$;
\item[(iii)] madelon (train), $m = 2000$, $n = 500$;
\end{enumerate}
scaling all features of the last dataset in $[-1,1]$, while the other ones had already been scaled or normalized.

Results with respect to the number of iterations are reported in Figures~\ref{fig:l2_log_reg_f_it}--\ref{fig:l2_log_reg_g_it}.
We see that, for $q=1$, \IBCN\ and \BCDSECOND\ have similar performance and both of them give better results than \BCDFIRST.
For larger values of $q$, \IBCN\ provides a faster objective decrease and is able to
produce points with a smaller gradient norm than the two competitive methods.
As in the case of non-convex problems discussed in the previous subsection,
the superiority of \IBCN\ becomes more evident as the block size increases.

Finally, results with respect to the CPU time are reported in Figure~\ref{fig:l2_log_reg_gisette_time} only for the gisette dataset
since, for the other datasets, the methods take a few seconds in most cases.
We see that \IBCN\ seems to provide the best results for $q \ge 5$, confirming the above findings.

\begin{sidewaysfigure}
\centering
\subfloat[gisette dataset]
{\includegraphics[scale=0.8, trim = 0.4cm 2cm 0cm 0.3cm, clip]{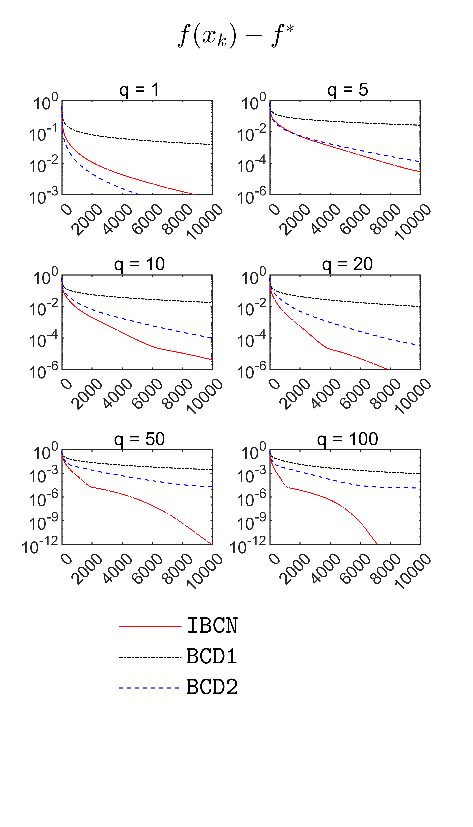}} \;
\subfloat[leu dataset]
{\includegraphics[scale=0.8, trim = 0cm 2cm 0cm 0.3cm, clip]{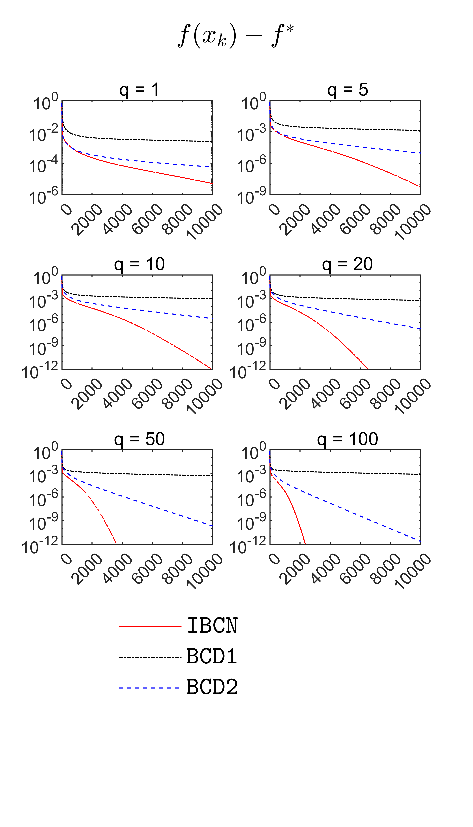}} \;
\subfloat[madelon dataset]
{\includegraphics[scale=0.8, trim = 0cm 2cm 0.65cm 0.3cm, clip]{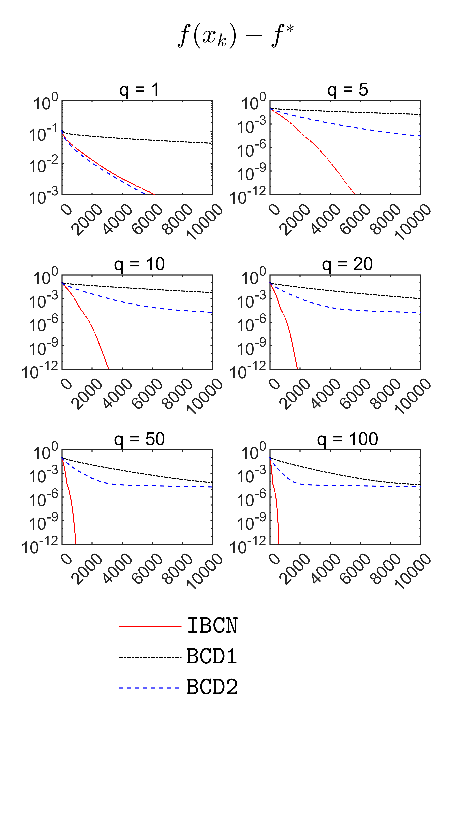}}
\caption{Objective error vs iteration for $\ell_2$-regularized logistic regression using blocks of size $q$. In each plot, the $y$ axis is in logarithmic scale.}
\label{fig:l2_log_reg_f_it}
\end{sidewaysfigure}

\begin{sidewaysfigure}
\centering
\subfloat[gisette dataset]
{\includegraphics[scale=0.8, trim = 0.4cm 2cm 0cm 0.3cm, clip]{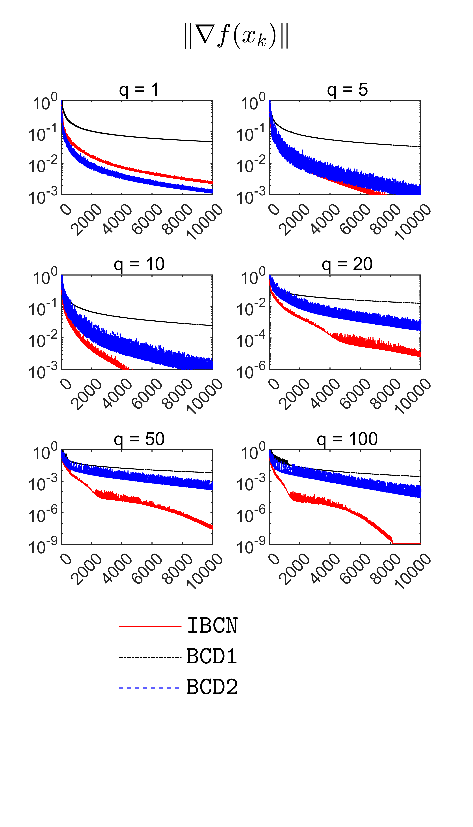}} \;
\subfloat[leu dataset]
{\includegraphics[scale=0.8, trim = 0cm 2cm 0cm 0.3cm, clip]{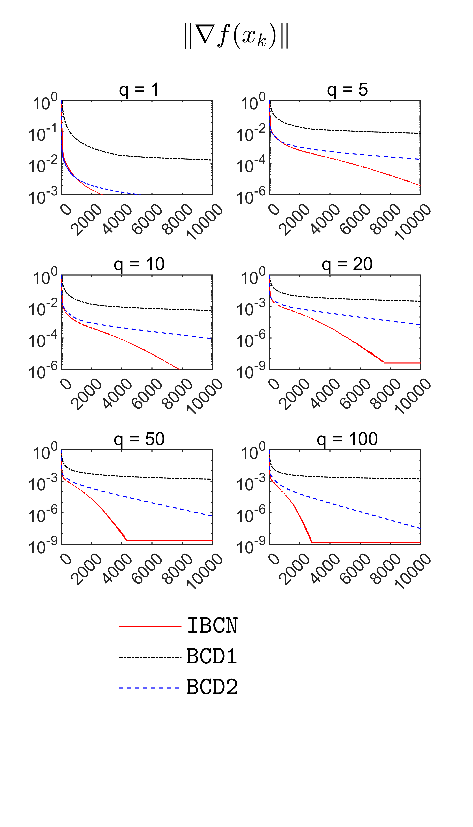}} \;
\subfloat[madelon dataset]
{\includegraphics[scale=0.8, trim = 0cm 2cm 0.65cm 0.3cm, clip]{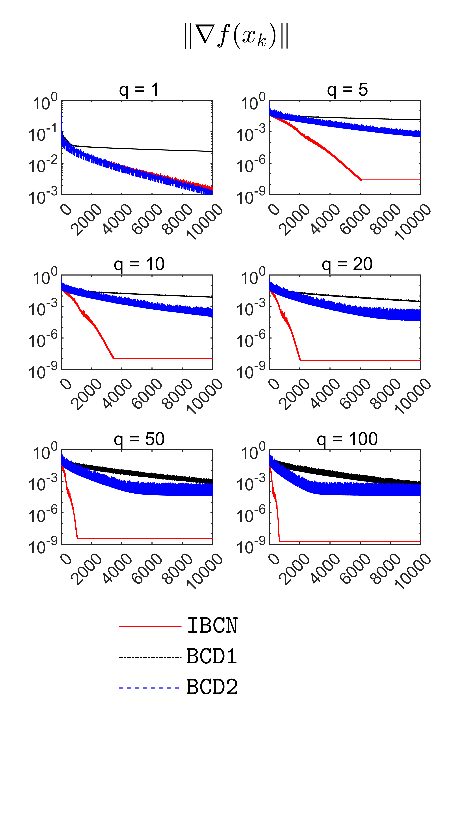}}
\caption{Stationarity violation vs iteration for $\ell_2$-regularized logistic regression using blocks of size $q$. In each plot, the $y$ axis is in logarithmic scale.}
\label{fig:l2_log_reg_g_it}
\end{sidewaysfigure}

\begin{figure}
\centering
\subfloat[Objective error vs CPU time]
{\includegraphics[scale=0.8, trim = 0.4cm 2cm 0.55cm 0.3cm, clip]{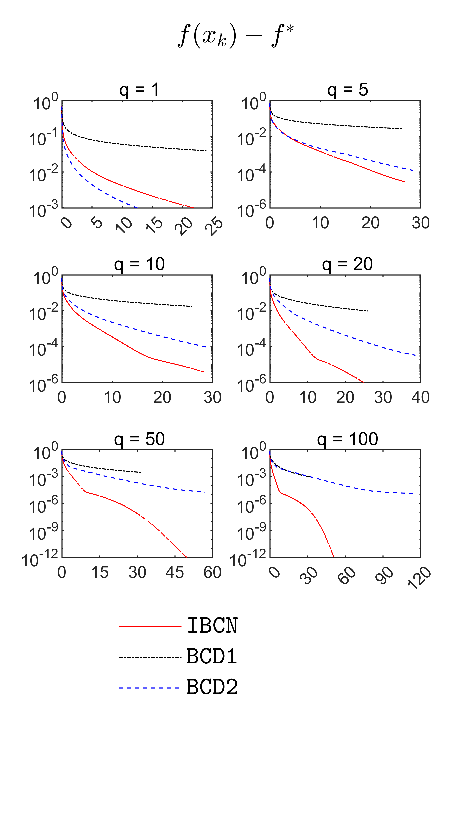}} \qquad \qquad
\subfloat[Stationarity violation vs CPU time]
{\includegraphics[scale=0.8, trim = 0.4cm 2cm 0.55cm 0.3cm, clip]{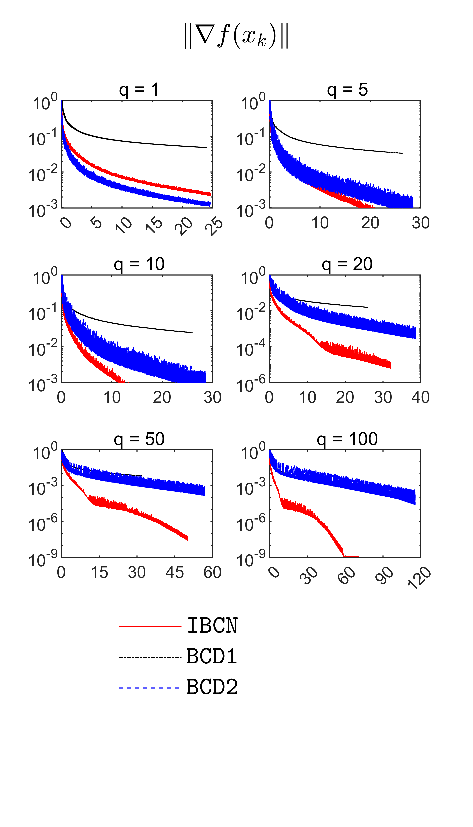}}
\caption{Results on $\ell_2$-regularized logistic regression with respect to the CPU time using blocks of size $q$ for gisette dataset.
In each plot, the $y$ axis is in logarithmic scale.}
\label{fig:l2_log_reg_gisette_time}
\end{figure}

\subsection{Comparison with other block selection rules}\label{subsec:num_sel}
As mentioned in Section~\ref{sec:intro}, different block selection rules can be used in a block coordinate descent scheme,
with the most efficient choice depending on computational aspects related to the features of the specific problem \cite{dhillon:2011,nutini:2015,nutini:2022,venturini:2023}.
Here, we want to compare the proposed greedy selection with two popular choices~\cite{nutini:2022,wright:2015}:
\begin{itemize}
\item \textit{cyclic selection}, in which each variable is selected at least once within any window of a pre-specified number of iterations;
\item \textit{random selection}, in which blocks are chosen randomly at every iteration.
\end{itemize}
On the one hand, both cyclic and random selection rules do not require to compute the full gradient of the objective function at each iteration, and can therefore be more computationally efficient in some cases. On the other hand, a greedy selection uses more information, so larger per-iteration progress is expected.

For first-order methods, it is well known that, in practice, the best choice among greedy, cyclic and random selection is related to the relative cost of performing 
the update of all blocks versus performing a full gradient iteration~\cite{nutini:2015}.
Some comparisons can be found in~\cite{dhillon:2011,nutini:2015,nutini:2022,venturini:2023}.

For cubic Newton schemes, an analysis of cyclic selection was carried out in~\cite{amaral:2022}, while random selection strategies were studied in~\cite{doikov:2018,hanzely:2020,zhao:2024}.
In what follows, we investigate how the performance, measured in terms of running time, of the considered block selection rules is affected by the per-iteration cost.
For this analysis, we consider the sparse least squares problem from Subsubsection~\ref{subsub:sp_ls} and distinguish two scenarios:
\begin{itemize}
\item[(i)] the per-iteration cost of greedy selection is higher than that of cyclic and random selection, see Subsubsection~\ref{subsub_hessian_not_stored};
\item[(ii)] the per-iteration cost of greedy selection is similar to that of cyclic and random selection, see Subsubsection~\ref{subsub_hessian_stored}.
\end{itemize}
As will be shown, which scenario occurs depends on whether or not the matrix $A^TA$ can be stored.
In particular, in our analysis, the per-iteration cost is approximated by the number of arithmetic operations to compute the required first- and second-order derivatives
of the quadratic term, which we assume dominates the workload of minimizing the cubic model while the operations on the regularizer $P(x)$ have a negligible cost due to its separable structure.

In the following experiments, cyclic and random selection were implemented using a uniform random sampling strategy without and with replacement, respectively. We left all other algorithmic choices as in \IBCN.
Moreover, the methods using cyclic and random selection were stopped after the time taken by the method using the greedy selection on the same instance. The results were then averaged over 10 simulations.

\subsubsection{Greedy selection: higher per-iteration cost than cyclic and random selection}\label{subsub_hessian_not_stored}
If the matrix $A^TA$ is not stored, in Subsubsection~\ref{subsub:sp_ls} we described how, for each iteration $k$, the Hessian of $f$ with respect to a block of size $q$ requires $\bigO{mq^2}$ arithmetic operations, while we can use the residual vector $r_k$ to compute $\nabla f(x_k)$ in $\bigO{mn}$ arithmetic operations by means of~\eqref{gk_residual}.
Hence, to get the required first- and second-order derivatives for an iteration with greedy selection,
we have a total cost of $\bigO{m\max\{q^2,n\}}$ arithmetic operations, which boils down to $\bigO{mn}$ since $n \ge q^2$ in our experiments.

Employing cyclic or random selection, at each iteration we still need $\bigO{mq^2}$ arithmetic operations to compute the Hessian of $f$ with respect to a block of size $q$,
while we only have to compute $q$ first-order partial derivatives, the latter having a cost $\bigO{mq}$ still using~\eqref{gk_residual} for the selected rows of $A^T$.
Hence, the total cost is of $\bigO{mq^2}$ arithmetic operations for each iteration, that is, computing the Hessian of $f$ with respect to a block is the dominant cost.
We conclude that, for the greedy selection, the per-iteration cost is about $n/q^2$ times higher than for cyclic and random selection.

The results are reported in Figure~\ref{fig:ls_sp_bs}(a). 
As expected, cyclic and random selection are faster in most cases due to their lower per-iteration cost, even if we observe that the greedy selection performs remarkably better when $q = 1$.

\subsubsection{Greedy selection: similar per-iteration cost to cyclic and random selection}\label{subsub_hessian_stored}
Assume now that the matrix $A^TA$ can be stored. First note that the cost to compute second-order derivatives is negligible in this case.
To compute first-order derivatives, we can update the gradient of $f$ from iteration $k$ to iteration $k+1$ by
\[
\nabla f(x_{k+1}) = \nabla f(x_k) + \frac 2m (A^T A) U_{\I_k} s_k + \lambda (\nabla P(x_{k+1})-\nabla P(x_k)) \quad \forall k \ge 0.
\]
Since $U_{\I_k} s_k$ has at most $q$ non-zeros components, it follows that $\nabla f(x_k)$ can be computed with a cost of $\bigO{qn}$ arithmetic operations.
The same procedure also applies to compute $q$ first-order partial derivatives when using cyclic or random selection,
implying that the cost is the same as computing the full gradient, that is, $\bigO{qn}$.
(Note that, since $m = n$ in our experiments, the cost of computing the required first-order derivatives for cyclic and random selection
is the same as the one obtained in the previous scenario where $A^T A$ was not stored.)

The results are reported in Figure~\ref{fig:ls_sp_bs}(b). 
We see that the greedy selection consistently outperforms both the cyclic and the random selection. 
The reason is that the per-iteration cost is comparable across the considered strategies, while the greedy selection uses more information by employing the full gradient of $f$ to select a block at each iteration.
We also observe that computing $A^TA$ takes approximately 3 seconds on average. Therefore, storing that matrix seems the most efficient option in the current configuration (cf. Figure~\ref{fig:ls_sp_bs}(a)).

\begin{figure}
\centering
\subfloat[The Hessian of the quadratic term is not stored. Here greedy selection has a higher per-iteration cost than cyclic and random selection.\label{fig:ls_sp_bs0}]
{\includegraphics[scale=0.8, trim = 0.4cm 2cm 0.65cm 0.3cm, clip]{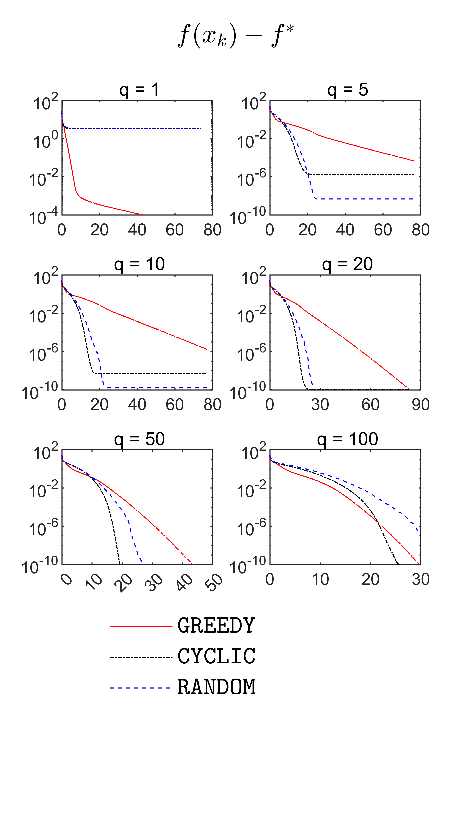}} \qquad \qquad
\subfloat[The Hessian of the quadratic term is stored. Here greedy selection has a similar per-iteration cost to cyclic and random selection.\label{fig:ls_sp_bs1}]
{\includegraphics[scale=0.8, trim = 0.4cm 2cm 0.65cm 0.3cm, clip]{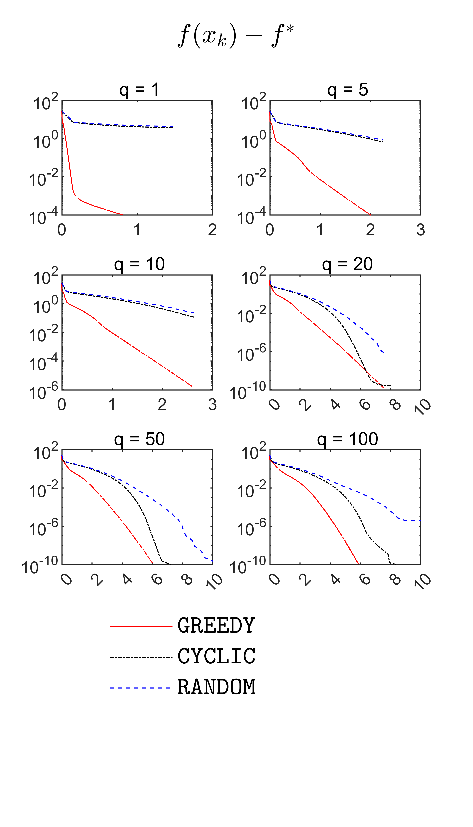}}
\caption{Objective error vs CPU time for sparse least squares using blocks of size $q$. In each plot, the $y$ axis is in logarithmic scale.}
\label{fig:ls_sp_bs}
\end{figure}

\section{Conclusions}\label{sec:concl}
In this paper, we have considered the unconstrained minimization of an objective function with Lipschitz continuous Hessian.
For this problem, we have presented a block coordinate descent version of cubic Newton methods using a greedy (Gauss-Southwell) selection rule,
where blocks of variables are chosen by considering the amount of first-order stationarity violation.
To update the selected block at each iteration, an inexact minimizer of a cubic model is computed.
In practice, such an inexact minimization can be carried out in finite time
without the need of additional evaluations of the objective function or its derivatives in other points.
In the proposed scheme, blocks are not required to have a predetermined structure and their size may even change during the iterations.
Moreover, the knowledge of the Lipschitz constant of the Hessian is not needed.

In a non-convex setting, we have shown global convergence to stationary points and analyzed the worst-case iteration complexity.
Specifically, we have shown that at most $\bigO{\epsilon^{-3/2}}$ iterations are needed to drive
the stationarity violation with respect to a selected block of variables below $\epsilon$,
while at most $\bigO{\epsilon^{-2}}$ iterations are needed to drive the stationarity violation with respect to all variables below $\epsilon$.
In particular, the latter result improves over $\bigO{\epsilon^{-3}}$ which was given in~\cite{amaral:2022} for cyclic-type selection.

Then, we have tested the proposed method, named \IBCN, on non-convex and convex problems arising in regression and classification models.

Numerical results indicate that the proposed method consistently outperforms other greedy schemes, with the performance gap widening as the block size increases. We have also shown that the proposed approach works particularly well in a regime where the full gradient computation does not significantly affect the per-iteration cost. In such a setting, the proposed greedy selection outperforms cyclic and random schemes.

The code of the proposed \IBCN\ method is freely available at \url{https://github.com/acristofari/ibcn}.

Finally, further investigation needs to be devoted to analyzing the worst-case iteration complexity in convex and strongly convex problems.

\section*{Disclosure statement}
No potential conflict of interest was reported by the author(s).

\newpage
\bibliographystyle{tfs}
\bibliography{cristofari2026}

\appendix
\section{Properties from Lipschitz continuity}\label{append:cubic}

\begin{proof}[Proof of Proposition~\ref{prop:lips_block}]
Choose $\I \subseteq \{1,\ldots,n\}$ and define the function $\psi \colon \Rn \to \RI$, $\psi(x) = U_{\I}^T \nabla f(x)$.
Namely, using~\eqref{subvec_g},
\[
\psi(x) = \nabla_{\I} f(x) \quad \forall x \in \Rn.
\]
Now, take $x \in \Rn$ and $s \in \RI$. Applying the mean value theorem to $\psi$, we can write
\[
\begin{split}
\nabla_{\I} f(x+U_{\I}s) - \nabla_{\I} f(x) & = \psi(x+U_{\I}s) - \psi(x) \\
                                            & = \int_0^1 \nabla \psi(x + t U_{\I} s)^T U_{\I} s \, dt \\
                                            & = \int_0^1 U_{\I}^T \nabla^2 f(x + t U_{\I} s) U_{\I} s \, dt \\
                                            & = \int_0^1 \nabla^2_{\I} f(x + t U_{\I} s) s \, dt,
\end{split}
\]
where we have used~\eqref{submat_h} in the last equality.
Adding $-\nabla^2_{\I} f(x) s$ to all terms, we obtain
\begin{equation}\label{upp_bound_lips_ineq1}
\begin{split}
\norm{\nabla_{\I} f(x+U_{\I}s) - \nabla_{\I} f(x) - \nabla^2_{\I} f(x) s} & = \norm[\Big]{\int_0^1 (\nabla^2_{\I} f(x + t U_{\I} s) - \nabla^2_{\I} f(x)) s \, dt} \\
                                                              & \le \int_0^1 \norm{(\nabla^2_{\I} f(x + t U_{\I} s) - \nabla^2_{\I} f(x)) s} \, dt \\
                                                              & \le \norm s \int_0^1 \norm{\nabla^2_{\I} f(x + t U_{\I} s) - \nabla^2_{\I} f(x)} \, dt \\
                                                              &  \le L_{\I} \norm s^2 \int_0^1 t \, dt \\
                                                              & = \frac{L_{\I}}2 \norm s^2,
\end{split}
\end{equation}
where the last inequality follows from~\eqref{hess_lips}.
Thus, \eqref{ineq_lips_block} holds.

To show~\eqref{ub_lips_block}, by the mean value theorem we can write
\begin{equation}\label{mvt_f}
\begin{split}
f(x+U_{\I}s) - f(x) = \int_0^1 \nabla f(x + t U_{\I} s)^T U_{\I} s\, dt = \int_0^1 \nabla_{\I} f(x + t U_{\I} s)^T s\, dt,
\end{split}
\end{equation}
where we have used~\eqref{subvec_g} in the last equality.
Adding $-\nabla_{\I} f(x)^T s - \frac12 s^T \nabla^2_{\I} f(x) s$ to all terms, we obtain
\[
\begin{split}
\abs[\Big]{f(x+U_{\I}s) - f(x) - \nabla_{\I} f(x)^T s - \frac12 s^T \nabla^2_{\I} f(x) s} & = \\
\abs[\Big]{\int_0^1 (\nabla_{\I} f(x + t U_{\I} s) - \nabla_{\I} f(x) - t \nabla^2_{\I} f(x)s)^T s\, dt} & \le \\
\int_0^1 \abs{(\nabla_{\I} f(x + t U_{\I} s) - \nabla_{\I} f(x) - t \nabla^2_{\I} f(x)s)^T s} dt & \le \\
\norm s \int_0^1 \norm{\nabla_{\I} f(x + t U_{\I} s) - \nabla_{\I} f(x) - t \nabla^2_{\I} f(x)s} dt & \le \\
\frac{L_{\I}}2 \norm s^3 \int_0^1 t^2\, dt & = \\
\frac{L_{\I}}6 \norm s^3, &
\end{split}
\]
where the last inequality follows from~\eqref{ineq_lips_block}.
Thus, \eqref{ub_lips_block} holds.
\end{proof}

\end{document}